\documentclass[12pt,twoside]{amsart}
\usepackage{amsmath}
\usepackage{amsthm}
\usepackage{amsfonts}
\usepackage{amssymb}
\usepackage{latexsym}
\usepackage{mathrsfs}
\usepackage{amsmath}
\usepackage{amsthm}
\usepackage{amsfonts}
\usepackage{amssymb}
\usepackage{latexsym}
\usepackage{geometry}
\usepackage{dsfont}
\usepackage[dvips]{graphicx}
\usepackage{color}
\usepackage[all]{xy}

\date{}
\pagestyle{plain}
\textheight= 22 true cm \textwidth =15 true cm
\allowdisplaybreaks[4] \footskip=15pt
\renewcommand{\uppercasenonmath}[1]{}

\topmargin=27pt \evensidemargin0pt \oddsidemargin0pt
\usepackage{graphicx,amssymb}
\usepackage[all]{xy}
\usepackage{amsmath}

\allowdisplaybreaks
\usepackage{amsthm}
\usepackage{color}

\theoremstyle{plain}
\newtheorem{theorem}{Theorem}[section]
\newtheorem{proposition}[theorem]{Proposition}

\theoremstyle{definition}
\newtheorem{example}[theorem]{Example}
\newtheorem{definition}[theorem]{Definition}

\theoremstyle{definition}
\newtheorem*{acknowledgement}{Acknowledgement}

\theoremstyle{remark}
\newtheorem{remark}[theorem]{Remark}




\newcommand{\Tor}{\mbox{\rm Tor}}

\newcommand{\C}{\mathcal{C}}

\newcommand{\Id}{\mathrm{Id}}

\def\p{\frak p}
\def\m{\frak m}

\def\Hom{{\rm Hom}}
\def\Ext{{\rm Ext}}
\def\Tor{{\rm Tor}}

\def\Ker{{\rm Ker}}

\def\Im{{\rm Im}}
\def\Coker{{\rm Coker}}

\def\Max{{\rm Max}}

\def\Spec{{\rm Spec}}
\def\Max{{\rm Max}}

\def\Id{{\rm Id}}

\begin{document}
\begin{center}
{\large  \bf On uniformly $S$-absolutely pure modules}

\vspace{0.5cm}  \ Xiaolei Zhang$^{a}$

{\footnotesize a.\ \ \ School of Mathematics and Statistics, Shandong University of Technology, Zibo 255049, China\\

E-mail: zxlrghj@163.com\\}
\end{center}

\bigskip
\centerline { \bf  Abstract}
\bigskip
\leftskip10truemm \rightskip10truemm \noindent

Let $R$ be a commutative ring with identity and $S$ a multiplicative subset of $R$. In this paper, we  introduce and study the notions of $u$-$S$-pure $u$-$S$-exact sequences and uniformly $S$-absolutely pure modules which extend the classical notions of pure exact sequences and absolutely pure modules. And then we characterize uniformly $S$-von Neumann regular rings and  uniformly $S$-Noetherian rings using uniformly $S$-absolutely pure modules.
\vbox to 0.3cm{}\\
{\it Key Words:} $u$-$S$-pure $u$-$S$-exact sequences; uniformly $S$-absolutely pure modules; uniformly $S$-von Neumann regular rings;  uniformly $S$-Noetherian rings.\\
{\it 2020 Mathematics Subject Classification:} 16U20, 13E05, 16E50.

\leftskip0truemm \rightskip0truemm
\bigskip

\section{introduction and Preliminary}

Throughout this paper, $R$  is always a commutative ring with identity, all modules are unitary and $S$  is always a multiplicative subset of $R$, that is, $1\in S$ and $s_1s_2\in S$ for any $s_1\in S, s_2\in S$.

The notion of absolutely pure modules was first introduced by  Maddox \cite{M67} in 1967. An $R$-module $E$ is said to be \emph{absolutely pure} provided that $E$ is a pure submodule of every module which contains $E$ as a submodule. It is well-known that an $R$-module $E$ is absolutely pure if and only if $\Ext_R^1(N,E)=0$ for any finitely presented module $N$ (\cite[Proposition 2.6]{S70}). So absolutely pure modules are also studied with the terminology FP-injective modules (FP for finitely presented), see Stenstr\"{o}m \cite{S70} and Jain \cite{J73}  for example. The notion of absolutely pure modules is very attractive in that it is not only a generalization of that of injective modules but also an important tool to characterize some classical rings.  A ring $R$ is semihereditary if and only if any homomorphic image of an absolutely pure $R$-module is absolutely pure(\cite[Theorem 2]{M70}). A ring $R$ is Noetherian if and only if any absolutely pure $R$-module is injective (\cite[Theorem 3]{M70}). A ring $R$ is von-Neumann regular if and only if any $R$-module is absolutely pure (\cite[Theorem 5]{M70}).  A ring $R$ is coherent if and only if the class of absolutely pure $R$-modules is closed under direct limits, if and only if the class of absolutely pure $R$-modules is a (pre)cover (\cite[Theorem 3.2]{S70},  \cite[Corollary 3.5]{DD19}).

One of the most important methods to generalize the classical rings and modules is in terms of  multiplicative subsets $S$ of $R$ (see \cite{ad02,s19,bh18,l15,lO14} for example). In 2002,  Anderson and Dumitrescu \cite{ad02} introduced \emph{$S$-Noetherian rings} $R$ in which for any ideal $I$ of $R$, there exists a finitely generated sub-ideal $K$ of $I$ such that  $sI\subseteq K$.  Cohen's Theorem, Eakin-Nagata Theorem and Hilbert Basis Theorem for $S$-Noetherian rings are given in \cite{ad02}. However, the choice of $s\in S$ such that $sI\subseteq K$ in the definition of $S$-Noetherian rings as above is not uniform. Hence,  Qi et al. \cite{QKWCZ21} introduced the notion of uniform $S$-Noetherian rings and obtained the Eakin-Nagata-Formanek Theorem and Cartan-Eilenberg-Bass Theorem for  uniformly $S$-Noetherian rings.  Recently, the first author of the paper \cite{zwz21} introduced the notions of $u$-$S$-flat modules and uniformly $S$-von Neumann regular rings which can be seen as  uniformly $S$-versions of flat modules and von Neumann regular rings. In this paper, we generalized the classical pure exact sequences and absolutely pure modules to $u$-$S$-pure $u$-$S$-exact sequences and $u$-$S$-absolutely pure modules, and then obtain uniformly $S$-versions of some classical characterizations of pure exact sequences and absolutely pure modules (see Theorem \ref{c-s-pure} and Theorem \ref{c-s-abp}). Finally, we characterize uniformly $S$-von Neumann regular rings and  uniformly $S$-Noetherian rings using $u$-$S$-absolutely pure modules (see Theorem \ref{sabs-svnr} and Theorem \ref{usnoe-s-abp}). As our work involves the  uniformly $S$-torsion theory, we provide a quick review as below.

Recall from \cite{zwz21}, an $R$-module $T$ is said to be  $u$-$S$-torsion (with respect to $s$) provided that there exists an element $s\in S$ such that $sT=0$. An $R$-sequence $\cdots\rightarrow A_{n-1}\xrightarrow{f_n} A_{n}\xrightarrow{f_{n+1}} A_{n+1}\rightarrow\cdots$ is $u$-$S$-exact, if for any $n$ there is an element $s\in S$ such that $s\Ker(f_{n+1})\subseteq \Im(f_n)$ and $s\Im(f_n)\subseteq \Ker(f_{n+1})$.
An $R$-sequence $0\rightarrow A\xrightarrow{f} B\xrightarrow{g} C\rightarrow 0$ is called a short $u$-$S$-exact sequence (with respect to $s$), if $s\Ker(g)\subseteq \Im(f)$ and $s\Im(f)\subseteq \Ker(g)$ for some $s\in S$. An $R$-homomorphism $f:M\rightarrow N$ is an \emph{$u$-$S$-monomorphism} $($resp.,   \emph{$u$-$S$-epimorphism}, \emph{$u$-$S$-isomorphism}$)$  (with respect to $s$) provided $0\rightarrow M\xrightarrow{f} N$   $($resp., $M\xrightarrow{f} N\rightarrow 0$, $0\rightarrow M\xrightarrow{f} N\rightarrow 0$ $)$ is  $u$-$S$-exact  (with respect to $s$).
Suppose $M$ and $N$ are $R$-modules. We say $M$ is $u$-$S$-isomorphic to $N$ if there exists a $u$-$S$-isomorphism $f:M\rightarrow N$. A family $\C$  of $R$-modules  is said to be closed under $u$-$S$-isomorphisms if $M$ is $u$-$S$-isomorphic to $N$ and $M$ is in $\C$, then $N$ is  also in  $\C$. One can deduce from the following Poposition \ref{s-iso-inv} that the existence of $u$-$S$-isomorphisms of two $R$-modules is actually an equivalence relation.

\begin{proposition}\label{s-iso-inv}
Let $R$ be a ring and  $S$ a multiplicative subset of $R$. Suppose there is a $u$-$S$-isomorphism $f:M\rightarrow N$ for  $R$-modules  $M$ and $N$. Then there is a $u$-$S$-isomorphism $g:N\rightarrow M$ and $t\in S$ such that $f\circ g=t\Id_N$ and $g\circ f=t\Id_M$.
\end{proposition}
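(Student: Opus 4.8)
The plan is to unwind the definition of a $u$-$S$-isomorphism, construct an explicit ``approximate inverse'' $g$ by hand, and then check that it has all the advertised properties. First I would record what the hypothesis provides: since $0\rightarrow M\xrightarrow{f} N\rightarrow 0$ is $u$-$S$-exact with respect to some $s\in S$, reading the definition of $u$-$S$-exactness at the spot occupied by $M$ gives $s\Ker(f)=0$, and reading it at the spot occupied by $N$ gives $sN\subseteq\Im(f)$.

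Next I would build $g$. For each $n\in N$ choose $m_n\in M$ with $f(m_n)=sn$, which is possible because $sN\subseteq\Im(f)$, and set $g(n):=sm_n$. The key point to check is well-definedness: if $f(m)=f(m')=sn$, then $m-m'\in\Ker(f)$, hence $s(m-m')=0$, i.e.\ $sm=sm'$; therefore $g(n)$ does not depend on the choice of $m_n$. Additivity and $R$-linearity of $g$ then follow at once, since $m_n+m_{n'}$ is an admissible choice for $n+n'$ and $rm_n$ is an admissible choice for $rn$.

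I would then compute the two composites. We have $f(g(n))=f(sm_n)=sf(m_n)=s(sn)=s^2n$, so $f\circ g=s^2\Id_N$; and for $m\in M$, since $f(sm)=sf(m)$, we may take $m_{f(m)}=sm$, whence $g(f(m))=s(sm)=s^2m$, so $g\circ f=s^2\Id_M$. Setting $t:=s^2\in S$ gives the required identities $f\circ g=t\Id_N$ and $g\circ f=t\Id_M$. Finally, $g$ is itself a $u$-$S$-isomorphism (with respect to $t$): from $g\circ f=t\Id_M$ we get $tM\subseteq\Im(g)$, so $g$ is a $u$-$S$-epimorphism, and if $g(n)=0$ then $tn=f(g(n))=0$, so $t\Ker(g)=0$ and $g$ is a $u$-$S$-monomorphism.

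The only genuinely delicate step is the well-definedness of $g$, which is exactly where the relation $s\Ker(f)=0$ is consumed; the rest is a routine chase. I would also remark that, combined with the obvious facts that $\Id_M$ is a $u$-$S$-isomorphism and that a composite of $u$-$S$-isomorphisms is a $u$-$S$-isomorphism, this proposition shows that ``being $u$-$S$-isomorphic'' is an equivalence relation on $R$-modules, as asserted before the statement.
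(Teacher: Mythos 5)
Your proof is correct and is essentially the same as the paper's: the paper factors the map as $g = g_2\circ g_1$ through $\Im(f)$ (with $g_1(n)=sn$ and $g_2(f(m))=sm$), which is exactly your $g(n)=sm_n$ with $f(m_n)=sn$, and both arguments consume $s\Ker(f)=0$ at the well-definedness step and conclude with $t=s^2$. No gaps.
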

\begin{proof} Consider the following commutative diagram:
$$\xymatrix{
0\ar[r]^{}& \Ker(f)\ar[r]^{} &M\ar[rr]^{f}\ar@{->>}[rd] &&N \ar[r]^{} & \Coker(f)\ar[r]^{} &  0\\
  & & &\Im(f) \ar@{^{(}->}[ru] &&  &   \\}$$
with $s\Ker(f)=0$ and $sN\subseteq \Im(f)$ for some $s\in S$. Define $g_1:N\rightarrow \Im(f)$ where $g_1(n)=sn$ for any $n\in N$. Then $g_1$ is a well-defined $R$-homomorphism since $sn\in \Im(f)$. Define $g_2:\Im(f)\rightarrow M$ where $g_2(f(m))=sm$. Then  $g_2$ is  well-defined $R$-homomorphism. Indeed, if $f(m)=0$, then $m\in \Ker(f)$ and so $sm=0$. Set $g=g_2\circ g_1:N\rightarrow M$. We claim that $g$ is a $u$-$S$-isomorphism. Indeed, let $n$ be an element in $\Ker(g)$. Then $sn=g_1(n)\in \Ker(g_2)$. Note that $s\Ker(g_2)=0$. Thus $s^2n=0$. So $s^2\Ker(g)=0$. On the other hand, let $m\in M$. Then $g(f(m))=g_2\circ g_1(f(m))=g_2(f(sm))=s^2m$. Set $t=s^2\in S$. Then $g\circ f=t\Id_M$ and $tm\in \Im(g)$. So $tM\subseteq \Im(g)$. It follows that $g$ is  a $u$-$S$-isomorphism. It is also easy to verify that  $f\circ g=t\Id_N$.
\end{proof}

\begin{remark}
Let $R$ be a ring,  $S$ a multiplicative subset of $R$ and $M$ and $N$ $R$-modules. Then the condition ``there is an $R$-homomorphism $f:M\rightarrow N$  such that $f_S:M_S\rightarrow N_S$ is an isomorphism''  does not mean ``there is an $R$-homomorphism $g:N\rightarrow M$  such that $g_S:N_S\rightarrow M_S$ is an isomorphism''.

Indeed, let $R=\mathbb{Z}$ be the ring of integers, $S=R-\{0\}$  and $\mathbb{Q}$ the quotient field of integers. Then the embedding map $f:\mathbb{Z}\hookrightarrow \mathbb{Q}$ satisfies $f_S:\mathbb{Q}\rightarrow \mathbb{Q}$ is an isomorphism. However, since $\Hom_\mathbb{Z}(\mathbb{Q},\mathbb{Z})=0$, there does not exist any $R$-homomorphism $g:\mathbb{Q}\rightarrow \mathbb{Z}$  such that $g_S:\mathbb{Q}\rightarrow \mathbb{Q}$ is an isomorphism.
\end{remark}

The following two results state that a short $u$-$S$-exact sequence induces long $u$-$S$-exact sequences by the functors  ``Tor'' and  ``Ext'' as the classical cases.

\begin{theorem}\label{s-iso-tor}
Let $R$ be a ring,  $S$ a multiplicative subset of $R$ and $N$  an $R$-module. Let $0\rightarrow A\xrightarrow{f} B\xrightarrow{g} C\rightarrow 0$ be a $u$-$S$-exact sequence of $R$-modules. Then for any $n\geq 1$ there is an $R$-homomorphism $\delta_n:\Tor_{n}^R(C,N)\rightarrow \Tor_{n-1}^R(A,N)$ such that  the induced sequence
$$\cdots \rightarrow \Tor_{n}^R(A,N)\rightarrow \Tor_{n}^R(B,N)\rightarrow \Tor_{n}^R(C,N)\xrightarrow{\delta_n} \Tor_{n-1}^R(A,N)\rightarrow $$
$$\Tor_{n-1}^R(B,N)\rightarrow \cdots  \rightarrow\Tor_{1}^R(C,N)\xrightarrow{\delta_1} A\otimes_RN\rightarrow B\otimes_RN\rightarrow C\otimes_RN\rightarrow 0$$
 is $u$-$S$-exact.
\end{theorem}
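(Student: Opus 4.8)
The plan is to replace the given $u$-$S$-exact sequence by honest short exact sequences linked to it by $u$-$S$-isomorphisms, and then splice the classical long exact $\Tor$-sequences they produce. Write $K=\Ker f$, $I=\Im f$, $L=\Ker g$, $J=\Im g$, and fix $s\in S$ with $sK=0$, $sI\subseteq L$, $sL\subseteq I$ and $sC\subseteq J$ (hence also $s\,\Coker g=0$). Two elementary remarks do most of the work. First, $K$ and $\Coker g$ are $u$-$S$-torsion, and since $s\cdot\Id_M=0$ forces $s\cdot\Id_{\Tor_n^R(M,N)}=0$ by functoriality, $\Tor_n^R(K,N)$ and $\Tor_n^R(\Coker g,N)$ are $u$-$S$-torsion for every $n$. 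Second, multiplication by $s$ defines $R$-homomorphisms $I\to L$ and $L\to I$ whose composites in both orders equal $s^2\Id$; hence $I$ is $u$-$S$-isomorphic to $L$, and by Proposition \ref{s-iso-inv} a $u$-$S$-isomorphism of modules induces $u$-$S$-isomorphisms on $\Tor_n^R(-,N)$, so $\Tor_n^R(I,N)$ is $u$-$S$-isomorphic to $\Tor_n^R(L,N)$ for every $n$.

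Next I would apply the classical long exact $\Tor$-sequence to the genuine short exact sequences $0\to K\to A\to I\to 0$ and $0\to J\to C\to\Coker g\to 0$. By the first remark the outer terms are $u$-$S$-torsion, so the natural maps
$$\Tor_n^R(A,N)\ra\Tor_n^R(I,N)\qquad\text{and}\qquad\Tor_n^R(J,N)\ra\Tor_n^R(C,N)$$
induced by $A\twoheadrightarrow I$ and $J\hookrightarrow C$ are $u$-$S$-isomorphisms for all $n\geq 0$ (with $\Tor_0^R(-,N)=-\otimes_R N$). Combining with the second remark, $\Tor_n^R(A,N)$ is $u$-$S$-isomorphic to $\Tor_n^R(L,N)$ for all $n$; by Proposition \ref{s-iso-inv} I fix honest $R$-homomorphisms realizing all these inverses up to multiplication by a power of $s$.

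Now take the genuine short exact sequence $0\to L\to B\to J\to 0$ (recall $B/L\cong J$) and its long exact $\Tor$-sequence
$$\cdots\ra\Tor_n^R(L,N)\xrightarrow{\,a_n\,}\Tor_n^R(B,N)\xrightarrow{\,b_n\,}\Tor_n^R(J,N)\xrightarrow{\,c_n\,}\Tor_{n-1}^R(L,N)\ra\cdots,$$
which is exact on the nose. Because $f$ and $g$ factor through $I$ and $J$ and the comparison map $I\to L$ is multiplication by $s$, a short computation gives $\Tor_n^R(g,N)=\iota_n\circ b_n$, where $\iota_n\colon\Tor_n^R(J,N)\to\Tor_n^R(C,N)$ is the $u$-$S$-isomorphism above, and $a_n\circ\lambda_n=s\cdot\Tor_n^R(f,N)$, where $\lambda_n\colon\Tor_n^R(A,N)\to\Tor_n^R(L,N)$ is the $u$-$S$-isomorphism above. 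Defining $\delta_n$ as $c_n$ pre-composed with the chosen $u$-$S$-inverse of $\iota_n$ and post-composed with the chosen $u$-$S$-inverse of $\lambda_{n-1}$, the sequence in the statement is exactly the one obtained from the displayed honest long exact sequence by substituting $\Tor_*^R(A,N)$ for $\Tor_*^R(L,N)$ and $\Tor_*^R(C,N)$ for $\Tor_*^R(J,N)$ along these $u$-$S$-isomorphisms.

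The remaining point — and the only real work — is to verify that $u$-$S$-exactness survives this substitution. This is a diagram chase: starting from genuine exactness of $0\to L\to B\to J\to 0$ and its $\Tor$-sequence, one transports elements along the $u$-$S$-isomorphisms and their chosen near-inverses and uses the two relations of the previous paragraph; each step introduces at worst a factor that is a power of the single element $s$, so multiplying by a sufficiently large power of $s$ yields both inclusions $\Ker\subseteq\Im$ and $\Im\subseteq\Ker$ at every spot. Since the definition of a $u$-$S$-exact sequence permits the witnessing element of $S$ to vary from spot to spot, no uniformity in $n$ is needed and the argument closes. I expect this bookkeeping to be the main obstacle, and it can be isolated once and for all as a lemma (``$u$-$S$-exactness transfers along degreewise $u$-$S$-isomorphisms of sequences''). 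An alternative route avoids the splicing: tensoring a projective resolution $P_\bullet$ of $N$ with $0\to A\to B\to C\to 0$ keeps the sequence degreewise $u$-$S$-exact (tensoring with a flat module preserves $u$-$S$-monomorphisms, $u$-$S$-epimorphisms, and the middle condition), after which the usual snake-lemma construction with the parameter $s$ carried through produces $\delta_n$ and the long $u$-$S$-exact sequence directly; the bookkeeping there is comparable.
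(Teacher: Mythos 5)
Your proposal is correct and follows essentially the same route as the paper: decompose the $u$-$S$-exact sequence into the honest short exact sequences $0\to\Ker f\to A\to\Im f\to 0$, $0\to\Ker g\to B\to\Im g\to 0$, $0\to\Im g\to C\to\Coker g\to 0$, use that $\Ker f$ and $\Coker g$ are $u$-$S$-torsion to turn the comparison maps on $\Tor$ into $u$-$S$-isomorphisms, bridge $\Tor_n^R(\Im f,N)$ with $\Tor_n^R(\Ker g,N)$, and define $\delta_n$ by composing the classical connecting homomorphism of $0\to\Ker g\to B\to\Im g\to 0$ with these $u$-$S$-isomorphisms. The only cosmetic difference is that you bridge $\Im f$ and $\Ker g$ by the mutual multiplication-by-$s$ maps (composing to $s^2\Id$), whereas the paper routes through the common submodule $s\Ker(g)$ with $u$-$S$-torsion quotients; both yield the needed $u$-$S$-isomorphism, and your concluding transfer-of-exactness chase is the same bookkeeping the paper performs.
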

\begin{proof}
Since the sequence $0\rightarrow A\xrightarrow{f} B\xrightarrow{g} C\rightarrow 0$ is $u$-$S$-exact at $B$. There are three exact sequences $0\rightarrow \Ker(f)\xrightarrow{i_{\Ker(f)}} A\xrightarrow{\pi_{\Im(f)}} \Im(f)\rightarrow 0$, $0\rightarrow \Ker(g)\xrightarrow{i_{\Ker(g)}} B\xrightarrow{\pi_{\Im(g)}} \Im(g)\rightarrow 0$ and $0\rightarrow \Im(g)\xrightarrow{i_{\Im(g)}} C\xrightarrow{\pi_{\Coker(g)}} \Coker(g)\rightarrow 0$ with $\Ker(f)$ and $\Coker(g)$ $u$-$S$-torsion. There also exists  $s\in S$ such that $s\Ker(g)\subseteq \Im(f)$ and  $s\Im(f)\subseteq \Ker(g)$. Denote $T=\Ker(f)$ and $T'=\Coker(g)$.

Firstly,  consider the exact sequence
$$\Tor_{n+1}^R(T',N)\rightarrow \Tor_{n}^R(\Im(g),N)\xrightarrow{\Tor_{n}^R(i_{\Im(g)},N)} \Tor_{n}^R(C,N)\rightarrow \Tor_{n}^R(T',N).$$ Since $T'$ is $u$-$S$-torsion, $\Tor_{n+1}^R(T',N)$ and $\Tor_{n}^R(T',N)$  is $u$-$S$-torsion. Thus $\Tor_{n}^R(i_{\Im(g)},N)$ is a $u$-$S$-isomorphism. So there is also a $u$-$S$-isomorphism $h^n_{\Im(g)}: \Tor_{n}^R(C,N)\rightarrow \Tor_{n}^R(\Im(g),N)$ by Proposition \ref{s-iso-inv}. Consider the exact sequence:
$$ \Tor_{n-1}^R(T,N)\rightarrow \Tor_{n-1}^R(A,N)\xrightarrow{\Tor_{n-1}^R(\pi_{\Im(f)},N)} \Tor_{n-1}^R(\Im(f),N)\rightarrow  \Tor_{n-2}^R(T,N).$$ Since $T$ is $u$-$S$-torsion, we have $\Tor_{n-1}^R(\pi_{\Im(f)},N)$ is a $u$-$S$-isomorphism. So there is also a $u$-$S$-isomorphism $h^{n-1}_{\Im(f)}: \Tor_{n-1}^R(\Im(f),N)\rightarrow \Tor_{n-1}^R(A,N)$ by Proposition \ref{s-iso-inv}.
We have two  exact sequences$$\Tor_{n+1}^R(T_1,N)\rightarrow \Tor_{n}^R(s\Ker(g),N)\xrightarrow{\Tor_{n}^R(i^1_{s\Ker(g)},N)}  \Tor_{n}^R(\Im(f),N)\rightarrow  \Tor_{n+1}^R(T_1,N)$$ and $$\Tor_{n+1}^R(T_2,N)\rightarrow \Tor_{n}^R(s\Ker(g),N)\xrightarrow{\Tor_{n}^R(i^2_{s\Ker(g)},N)}  \Tor_{n}^R(\Ker(g),N)\rightarrow  \Tor_{n+1}^R(T_2,N),$$  where $T_1=\Im(f)/s\Ker(g)$ and $T_2=\Im(f)/s\Im(f)$ is $u$-$S$-torsion. So $\Tor_{n}^R(i^1_{s\Ker(g)},N)$ and $\Tor_{n}^R(i^2_{s\Ker(g)},N)$ are $u$-$S$-isomorphisms. Thus there is a $u$-$S$-isomorphism $h^n_{s\Ker(g)}: \Tor_{n}^R(\Ker(g),N)\rightarrow \Tor_{n}^R(s\Ker(g),N)$.
Note that there is an exact sequence
$$\Tor_{n}^R(B,N)\xrightarrow{\Tor_{n}^R(\pi_{\Im(g)},N)}\Tor_{n}^R(\Im(g),N)\xrightarrow{\delta^{n}_{\Im(g)}} \Tor_{n-1}^R(\Ker(g),N)\xrightarrow{\Tor_{n-1}^R(i_{\Ker(g)},N)} \Tor_{n-1}^R(B,N).$$
Set $\delta_n=h^n_{\Im(g)}\circ \delta^{n}_{\Im(g)}\circ h^n_{s\Ker(g)}\circ \Tor_{n}^R(i^1_{s\Ker(g)},N)\circ h^{n-1}_{\Im(f)} :\Tor_{n}^R(C,N)\rightarrow \Tor_{n-1}^R(A,N)$. Since $h^n_{\Im(g)}, \delta^{n}_{\Im(g)}, h^n_{s\Ker(g)}$ and $h^{n-1}_{\Im(f)}$ are $u$-$S$-isomorphisms, we have the sequence $\Tor_{n}^R(B,N)\rightarrow\Tor_{n}^R(C,N)\xrightarrow{\delta^{n}} \Tor_{n-1}^R(A,N)$ $\rightarrow \Tor_{n-1}^R(B,N)$ is $u$-$S$-exact.

Secondly, consider the exact sequence: $$\Tor_{n+1}^R(T,N)\rightarrow \Tor_{n}^R(A,N)\xrightarrow{\Tor_{n}^R(i_{\Im(f)},N)} \Tor_{n}^R(\Im(f),N)\rightarrow \Tor_{n}^R(T,N).$$
Since $T$ is $u$-$S$-torsion, $\Tor_{n}^R(i_{\Im(f)},N)$ is a $u$-$S$-isomorphism. Consider the exact sequences: $$\Tor_{n+1}^R(\Im(g),N)\rightarrow \Tor_{n}^R(\Ker(g),N)\xrightarrow{\Tor_{n}^R(i_{\Ker(g)},N)} \Tor_{n}^R(B,N)\rightarrow \Tor_{n}^R(\Im(g),N)$$
and $$\Tor_{n+1}^R(T',N)\rightarrow \Tor_{n}^R(\Im(g),N)\xrightarrow{\Tor_{n}^R(i_{\Im(g)},N)} \Tor_{n}^R(C,N)\rightarrow \Tor_{n}^R(T',N).$$
Since $T'$ is $u$-$S$-torsion, we have $\Tor_{n}^R(i_{\Im(g)},N)$ is a $u$-$S$-isomorphism.  Since $\Tor_{n}^R(i^1_{s\Ker(g)},N)$ and $\Tor_{n}^R(i^2_{s\Ker(g)},N)$ are $u$-$S$-isomorphisms as above, $\Tor_{n}^R(A,N)\rightarrow \Tor_{n}^R(B,N)\rightarrow \Tor_{n}^R(C,N)$ is $u$-$S$-exact at $\Tor_{n}^R(B,N)$.

Iterating the above steps,  we have the following $u$-$S$-exact sequence:
$$\cdots \rightarrow \Tor_{n}^R(A,N)\rightarrow \Tor_{n}^R(B,N)\rightarrow \Tor_{n}^R(C,N)\xrightarrow{\delta_n} \Tor_{n-1}^R(A,N)\rightarrow $$
$$\Tor_{n-1}^R(B,N)\rightarrow \cdots  \rightarrow\Tor_{1}^R(C,N)\xrightarrow{\delta_1} A\otimes_RN\rightarrow B\otimes_RN\rightarrow C\otimes_RN\rightarrow 0.$$
\end{proof}

Similar to the proof of Theorem \ref{s-iso-tor}, we can deduce  the following result.

\begin{theorem}\label{s-iso-ext}
Let $R$ be a ring, $S$ a multiplicative subset of $R$ and $M$ and $N$ $R$-modules. Suppose $0\rightarrow A\xrightarrow{f} B\xrightarrow{g} C\rightarrow 0$ is a $u$-$S$-exact sequence of $R$-modules. Then for any $n\geq 1$ there are $R$-homomorphisms $\delta_n:\Ext^{n-1}_R(M,C)\rightarrow \Ext^{n}_R(M,A)$ and $\delta^n: \Ext^{n-1}_R(A,N)\rightarrow \Ext^{n}_R(C,N)$ such that  the induced sequences
\begin{center}
$0\rightarrow \Hom_R(M,A)\rightarrow \Hom_R(M,B)\rightarrow \Hom_R(M,C)\xrightarrow{\delta_0} \Ext^{1}_R(M,A)\rightarrow \cdots \rightarrow\Ext^{n-1}_R(M,B)\rightarrow \Ext^{n-1}_R(M,C)\xrightarrow{\delta_n} \Ext^{n}_R(M,A)\rightarrow \Ext^{n}_R(M,B)\rightarrow \cdots$
\end{center}
and
\begin{center}
$0\rightarrow \Hom_R(C,N)\rightarrow \Hom_R(B,N)\rightarrow \Hom_R(A,N)\xrightarrow{\delta^0} \Ext^{1}_R(C,N)\rightarrow \cdots \rightarrow
 \Ext^{n-1}_R(B,N)\rightarrow \Ext^{n-1}_R(A,N)\xrightarrow{\delta^n} \Ext^{n}_R(C,N)\rightarrow  \Ext^{n}_R(B,N)\rightarrow \cdots$
 \end{center}
 are $u$-$S$-exact.
\end{theorem}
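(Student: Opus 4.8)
The plan is to repeat the proof of Theorem~\ref{s-iso-tor} almost word for word, with the functor $\Tor_\bullet^R(-,N)$ replaced by $\Ext_R^\bullet(M,-)$ for the first sequence and by $\Ext_R^\bullet(-,N)$ for the second. The single homological input used there was that $\Tor_n^R(T,N)$ is $u$-$S$-torsion whenever $T$ is; the analogue for $\Ext$ is just as easy: if $sT=0$ with $s\in S$, then multiplication by $s$ is the zero endomorphism of $T$, hence induces the zero endomorphism of $\Ext_R^n(M,T)$ and of $\Ext_R^n(T,N)$ for every $n\ge 0$ and all $M,N$; but that induced endomorphism equals multiplication by $s$, so $s\Ext_R^n(M,T)=0$ and $s\Ext_R^n(T,N)=0$. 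Consequently, whenever a homomorphism of $\Ext$-groups occurs in an honest exact sequence flanked on both sides by $u$-$S$-torsion modules it is a $u$-$S$-isomorphism, and by Proposition~\ref{s-iso-inv} it comes with a $u$-$S$-isomorphism in the opposite direction.

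Next, decompose $0\to A\xrightarrow{f}B\xrightarrow{g}C\to 0$ exactly as in the proof of Theorem~\ref{s-iso-tor}: into the genuine short exact sequences $0\to\Ker(f)\to A\to\Im(f)\to 0$, $0\to\Ker(g)\to B\to\Im(g)\to 0$, $0\to\Im(g)\to C\to\Coker(g)\to 0$, together with $0\to s\Ker(g)\to\Im(f)\to\Im(f)/s\Ker(g)\to 0$ and $0\to s\Ker(g)\to\Ker(g)\to\Ker(g)/s\Ker(g)\to 0$, where $s\in S$ satisfies $s\Ker(g)\subseteq\Im(f)$, $s\Im(f)\subseteq\Ker(g)$, and where $\Ker(f)$, $\Coker(g)$, $\Im(f)/s\Ker(g)$ and $\Ker(g)/s\Ker(g)$ are all $u$-$S$-torsion. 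For the first sequence, apply the classical long exact sequence of $\Ext_R^\bullet(M,-)$ to $0\to\Ker(g)\to B\to\Im(g)\to 0$ to get the genuine connecting maps $\partial^n\colon\Ext_R^{n-1}(M,\Im(g))\to\Ext_R^n(M,\Ker(g))$; using the torsion short exact sequences and the previous paragraph, build $u$-$S$-isomorphisms $\Ext_R^n(M,C)\to\Ext_R^n(M,\Im(g))$ (from $\Coker(g)$) and $\Ext_R^n(M,\Ker(g))\to\Ext_R^n(M,s\Ker(g))\to\Ext_R^n(M,\Im(f))\to\Ext_R^n(M,A)$ (from $\Ker(g)/s\Ker(g)$, $\Im(f)/s\Ker(g)$ and $\Ker(f)$), and define $\delta_n$ as the composite $\Ext_R^{n-1}(M,C)\to\Ext_R^{n-1}(M,\Im(g))\xrightarrow{\partial^n}\Ext_R^n(M,\Ker(g))\to\Ext_R^n(M,A)$. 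The second sequence is produced the same way, this time from the contravariant long exact sequence of $\Ext_R^\bullet(-,N)$ attached to $0\to\Ker(g)\to B\to\Im(g)\to 0$, whose connecting maps $\Ext_R^{n-1}(\Ker(g),N)\to\Ext_R^n(\Im(g),N)$ are pre-composed with $u$-$S$-isomorphisms $\Ext_R^{n-1}(A,N)\to\Ext_R^{n-1}(\Im(f),N)\to\Ext_R^{n-1}(s\Ker(g),N)\to\Ext_R^{n-1}(\Ker(g),N)$ and post-composed with $\Ext_R^n(\Im(g),N)\to\Ext_R^n(C,N)$, giving $\delta^n\colon\Ext_R^{n-1}(A,N)\to\Ext_R^n(C,N)$; the only change from the first case is that every arrow is reversed, since $\Ext_R^\bullet(-,N)$ is contravariant in its first slot.

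Finally, one verifies $u$-$S$-exactness of the two assembled sequences term by term, reading it off from exactness of the genuine $\Ext$ long exact sequences together with the elementary fact — already implicit in the proof of Theorem~\ref{s-iso-tor} — that pre- or post-composing one end of a $u$-$S$-exact pair $X\xrightarrow{\alpha}Y\xrightarrow{\beta}Z$ with a $u$-$S$-isomorphism preserves $u$-$S$-exactness at $Y$; since only finitely many such $u$-$S$-isomorphisms intervene at each term, a single element of $S$ (a suitable power of $s$ times the elements witnessing those $u$-$S$-isomorphisms) witnesses the $u$-$S$-exactness there, which is all the definition requires. I expect the main obstacle to be purely organizational: keeping straight the ladder of commutative squares identifying the honest long exact sequences with the claimed ones — in particular matching the unnamed maps of the statement with the functorial maps $\Ext_R^n(M,f),\Ext_R^n(M,g)$ (resp.\ $\Ext_R^n(f,N),\Ext_R^n(g,N)$) up to $u$-$S$-isomorphism, and tracking arrow directions carefully in the contravariant case. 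No step is conceptually harder than in Theorem~\ref{s-iso-tor}; it is merely notationally heavier.
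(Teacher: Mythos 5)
Your proposal is correct and follows exactly the route the paper intends: the paper's entire proof of this theorem is the single sentence that it is "similar to the proof of Theorem \ref{s-iso-tor}," and your write-up is precisely that adaptation, with the one genuinely new ingredient — that $\Ext_R^n(M,T)$ and $\Ext_R^n(T,N)$ are $u$-$S$-torsion when $T$ is, via $R$-linearity of $\Ext$ — supplied correctly. (Incidentally, your quotient $\Ker(g)/s\Ker(g)$ for the second auxiliary sequence is the right one; the paper's Tor proof has a typo there, writing $\Im(f)/s\Im(f)$.)
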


\section{$u$-$S$-pure $u$-$S$-exact sequences}
Recall from \cite{R79} that an exact sequence $0\rightarrow A\rightarrow B\rightarrow C\rightarrow 0$ is said to be pure provided that for any $R$-module $M$, the induced sequence $0\rightarrow M\otimes_RA\rightarrow M\otimes_RB\rightarrow M\otimes_RC\rightarrow 0$ is also exact. Now we introduce the uniformly $S$-version of pure exact sequences.

\begin{definition}\label{def-s-f} Let $R$ be a ring, $S$ a multiplicative subset of $R$.
A short $u$-$S$-exact sequence $0\rightarrow A\rightarrow B\rightarrow C\rightarrow 0$ is said to be  \emph{ $u$-$S$-pure} provided that for any $R$-module $M$, the induced sequence $0\rightarrow M\otimes_RA\rightarrow M\otimes_RB\rightarrow M\otimes_RC\rightarrow 0$ is also $u$-$S$-exact.
\end{definition}

Obviously, any  pure exact sequence is $u$-$S$-pure.  In \cite[34.5]{w}, there are many characterizations of pure exact sequences. The next result generalizes some of these characterizations to $u$-$S$-pure $u$-$S$-exact sequences.

\begin{theorem}\label{c-s-pure}
Let $0\rightarrow A\xrightarrow{f} B\xrightarrow{f'} C\rightarrow 0$  be a  short $u$-$S$-exact sequence of $R$-modules.
Then the following statements are equivalent:
\begin{enumerate}
\item $0\rightarrow A\xrightarrow{f} B\xrightarrow{f'} C\rightarrow 0$ is a $u$-$S$-pure $u$-$S$-exact sequence;
\item there exists an element $s\in S$ satisfying that if a system of equations $f(a_i)=\sum\limits^m_{j=1}r_{ij}x_j\ (i=1,\cdots,n)$ with $r_{ij} \in R$ and unknowns $x_1, \cdots, x_m$ has a solution in $B$, then the system of equations  $sa_i=\sum\limits^m_{j=1}r_{ij}x_j\ (i=1,\cdots,n)$ is solvable in $A$.
\item there exists an element $s\in S$ satisfying that for any given commutative diagram with $F$ finitely generated free and $K$ a finitely generated submodule of $F$, there exists a  homomorphism $\eta:F\rightarrow A$ such that $s\alpha=\eta i$;
 $$\xymatrix@R=20pt@C=25pt{
0\ar[r] &K\ar[d]_{\alpha}\ar[r]^{i}&F\ar@{-->}[ld]^{\eta}\ar[d]^{\beta}\\
 & A\ar[r]_{f} &B\\}$$
\item there exists an element $s\in S$ satisfying that for any finitely presented $R$-module $N$, the induced sequence $0\rightarrow\Hom_R(N,A)\rightarrow \Hom_R(N,B)\rightarrow \Hom_R(N,C)\rightarrow 0$ is  $u$-$S$-exact with respect to $s$.
\end{enumerate}
\end{theorem}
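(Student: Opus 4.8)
The plan is to prove the chain of implications $(1)\Rightarrow(4)\Rightarrow(3)\Rightarrow(2)\Rightarrow(1)$, mimicking the classical cycle in \cite[34.5]{w} but carefully tracking a single uniform element $s\in S$ through each step. Throughout I fix, from the hypothesis that $0\to A\xrightarrow{f}B\xrightarrow{f'}C\to 0$ is $u$-$S$-exact, an element $s_0\in S$ witnessing $u$-$S$-exactness, i.e. $s_0\Ker(f')\subseteq\Im(f)$, $s_0\Im(f)\subseteq\Ker(f')$, and $s_0\Ker(f)=0$; this $s_0$ will be absorbed into the final uniform witness by taking suitable products.

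For $(1)\Rightarrow(4)$: let $N$ be finitely presented, say $R^m\xrightarrow{\phi}R^n\to N\to 0$. Tensoring the $u$-$S$-pure sequence with $R^n$ and $R^m$ and using the defining $u$-$S$-exactness of $M\otimes_R A\to M\otimes_R B\to M\otimes_R C$ for $M=R^n,R^m$, together with the natural isomorphism $\Hom_R(N,-)\cong\Ker\big(\Hom_R(R^n,-)\to\Hom_R(R^m,-)\big)$, a diagram chase (the $u$-$S$ version of the snake/five lemma argument, using Theorem \ref{s-iso-ext} to control the relevant kernels and cokernels) yields that $0\to\Hom_R(N,A)\to\Hom_R(N,B)\to\Hom_R(N,C)\to 0$ is $u$-$S$-exact; the uniform $s$ here is a product of $s_0$ with the finitely many witnesses coming from the $R^n$- and $R^m$-tensored sequences, and crucially it does \emph{not} depend on $N$ because $R^n\otimes_R A\to R^n\otimes_R B\to R^n\otimes_R C$ is just a finite direct sum of the original sequence and so shares a witness with it.

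For $(4)\Rightarrow(3)$: given the diagram with $K\subseteq F$, $F$ finitely generated free and $K$ finitely generated, set $N=F/K$, which is finitely presented. The square gives an element of $\Hom_R(K,A)$; the obstruction to lifting $\alpha$ along $i$ to a map $F\to A$ lies in $\Ext^1_R(N,A)$, and $f\circ\alpha=\beta\circ i$ extends over $F$ (namely by $\beta$, up to the map $F\to B$), so the image of that obstruction class in $\Ext^1_R(N,B)$ vanishes. Now $u$-$S$-exactness of $\Hom_R(N,A)\to\Hom_R(N,B)\to\Hom_R(N,C)\to 0$ together with the connecting maps of Theorem \ref{s-iso-ext} forces $s$ times the obstruction class in $\Ext^1_R(N,A)$ to die, which produces $\eta:F\to A$ with $\eta\circ i=s\alpha$. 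For $(3)\Rightarrow(2)$: encode the system $f(a_i)=\sum_j r_{ij}x_j$ as follows — let $F=R^m$ with basis $e_1,\dots,e_m$, let $k_i=\sum_j r_{ij}e_j\in F$, let $K=\sum_i Rk_i$, define $\alpha:K\to A$ by $\alpha(k_i)=a_i$ (well-defined since the $k_i$ with their relations map correctly, using that a solution in $B$ exists) and $\beta:F\to B$ by $\beta(e_j)=x_j$; then $f\alpha=\beta i$ on $K$, so (3) gives $\eta:F\to A$ with $\eta i=s\alpha$, and $y_j:=\eta(e_j)\in A$ solve $sa_i=\sum_j r_{ij}y_j$. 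Finally $(2)\Rightarrow(1)$: every element of $M\otimes_R B$ whose image in $M\otimes_R C$ vanishes can be written, via a standard finitely-presented-approximation argument (expressing the vanishing as a finite system of linear equations over $R$ with a solution in $B$), as coming from $M\otimes_R A$ after multiplying by $s$; combined with right exactness of $\otimes$ and the $u$-$S$-exactness of the original sequence (witness $s_0$), this shows $0\to M\otimes_R A\to M\otimes_R B\to M\otimes_R C\to 0$ is $u$-$S$-exact with respect to $s s_0$ (or a small power thereof), uniformly in $M$.

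The main obstacle I anticipate is bookkeeping of the uniform element: in each implication one picks up new witnesses (from tensoring with free modules, from the connecting homomorphisms in Theorem \ref{s-iso-ext}, from Proposition \ref{s-iso-inv} when inverting $u$-$S$-isomorphisms), and one must check that finitely many such choices suffice and, especially in $(1)\Rightarrow(4)$ and $(2)\Rightarrow(1)$, that the witness can be chosen independently of the test module $N$ or $M$. The key point making this work is that tensoring or $\Hom$-ing the \emph{fixed} short $u$-$S$-exact sequence with a finitely generated free module only produces a finite direct sum of that sequence, whose $u$-$S$-exactness witness is inherited from $s_0$; the genuinely module-dependent data ($N$, or the finite system of equations) only contributes via a presentation and hence via maps between such finite free tensorings, never introducing a new element of $S$.
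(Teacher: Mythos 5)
Your cycle $(1)\Rightarrow(4)\Rightarrow(3)\Rightarrow(2)\Rightarrow(1)$ differs from the paper's ($(1)\Leftrightarrow(2)$, $(2)\Rightarrow(3)\Rightarrow(4)\Rightarrow(2)$), and the step $(1)\Rightarrow(4)$ as you describe it cannot work. You propose to use the $u$-$S$-purity hypothesis only for $M=R^n$ and $M=R^m$; but $R^n\otimes_R(-)$ applied to the sequence is just a finite direct sum of copies of the original sequence, so this uses nothing beyond the $u$-$S$-exactness you already assumed --- indeed you point this out yourself when arguing that the witness is independent of $N$. The surjectivity (even $u$-$S$-surjectivity) of $\Ker(B^n\to B^m)\to\Ker(C^n\to C^m)$, i.e.\ of $\Hom_R(N,B)\to\Hom_R(N,C)$, is \emph{not} a formal consequence of the rows and columns of that diagram being ($u$-$S$-)exact: if your ``snake/five lemma chase'' succeeded, every short exact sequence would satisfy $(4)$ and hence be pure, which is false (take $S=\{1\}$, $0\to\mathbb{Z}\xrightarrow{2}\mathbb{Z}\to\mathbb{Z}/2\to0$ and $N=\mathbb{Z}/2$). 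Purity must enter through tensoring with non-free finitely presented modules. This is also exactly where the uniformity of $s$ is won in the paper: one tensors with the \emph{single} module $M=\bigoplus_{(K,R^n)\in\Gamma}R^n/K$ ranging over all pairs $(K,R^n)$ with $K$ finitely generated, extracts one $s$ killing $\Ker(1_M\otimes f)$, and deduces the equational condition $(2)$ with that one $s$; condition $(4)$ is then reached from $(2)$ via $(3)$. Your plan never produces such a uniform witness because it never tensors with anything but free modules.

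A second, smaller issue: in $(3)\Rightarrow(2)$ your map $\alpha:K\to A$, $\alpha(k_i)=a_i$, need not be well defined. A relation $\sum_ic_ik_i=0$ in $F$ only gives $f\bigl(\sum_ic_ia_i\bigr)=0$, and since $f$ is merely a $u$-$S$-monomorphism this puts $\sum_ic_ia_i$ in $\Ker(f)$, which is $u$-$S$-torsion but possibly nonzero. This is fixable (set $\alpha(k_i)=s_0a_i$ with $s_0\Ker(f)=0$, at the cost of an extra factor $s_0$ in the final witness), but it is symptomatic of the bookkeeping the non-exactness of $f$ forces; the paper handles it with the auxiliary maps $t_1,t_1'$ from Proposition \ref{s-iso-inv}. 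The remaining arrows $(4)\Rightarrow(3)$ (obstruction class in $\Ext^1_R(F/K,A)$) and $(2)\Rightarrow(1)$ are sound in outline, the latter coinciding with the paper's argument.
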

\begin{proof} $(1)\Rightarrow(2)$: Set  $\Gamma=\{(K,R^n)\mid K$ is a finitely generated submodule of $ R^n$ and $n<\infty\}.$ Define $M=\bigoplus\limits_{(K,R^n)\in \Gamma}R^n/K$. Then  $0\rightarrow M\otimes_RA\xrightarrow{1\otimes f} M\otimes_RB\rightarrow M\otimes_RC\rightarrow 0$ by $(1)$.  So there is an element $s\in S$ such that $s\Ker(1_M\otimes f)=0$. Hence $s\Ker(1_{R^n/K}\otimes f)=0$ for any $(K,R^n)\in \Gamma$. Now assume that there exists $b_j\in B$ such that $f(a_i)=\sum\limits^m_{j=1}r_{ij}b_j$ for any $j=1,\cdots,m$. Let $F$ be a free $R$-module  with basis $\{e_1,\cdots,e_n\}$, and let $K\subseteq F$ be the submodule generated by $m$ elements $\{\sum\limits^n_{i=1}r_{ij}e_i\mid j=1,\cdots,m\}$. Then, $F/K$ is generated by $\{e_1+K,\cdots,e_n+K\}$. Note that $\sum\limits^n_{i=1}r_{ij}(e_i+K)=\sum\limits^n_{i=1}r_{ij}e_i+K=0+K$ in $F/K$. Hence, we have $$\sum\limits^n_{i=1}((e_i+K)\otimes f(a_i))=\sum\limits^n_{i=1}((e_i+K)\otimes (\sum\limits^m_{j=1}r_{ij}b_j))=\sum\limits^m_{j=1}((\sum\limits^n_{i=1}r_{ij}(e_i+K))\otimes b_j)=0$$
in $F/K\otimes B$. And so $\sum\limits^n_{i=1}((e_i+K)\otimes a_i)\in \Ker(1_{F/K}\otimes f)$.
Hence, $s\sum\limits^n_{i=1}((e_i+K)\otimes a_i)=\sum\limits^n_{i=1}((e_i+K)\otimes sa_i)=0$ in $F/K\otimes_RA$. By \cite[Chapter I, Lemma 6.1]{FS01}, there exists $d_j\in A$ and $t_{ij}\in R$ such that $sa_i=\sum\limits^t_{k=1}l_{ik}d_k$ and $\sum\limits^n_{i=1}l_{ik}(e_i+K)=0$, and so  $\sum\limits^n_{i=1}l_{ik}e_i\in K$. Then there exists $t_{jk}\in R$ such that $\sum\limits^n_{i=1}l_{ik}e_i=\sum\limits^m_{j=1}t_{jk}(\sum\limits^n_{i=1}r_{ij}e_i)=\sum\limits^n_{i=1}(\sum\limits^m_{j=1}(t_{jk}r_{ij})e_i)$. Since $F$ is free, we have $l_{ik}=\sum\limits^m_{j=1}r_{ij}t_{jk}$. Hence
$$sa_i=\sum\limits^t_{k=1}l_{ik}d_k=\sum\limits^t_{k=1}(\sum\limits^m_{j=1}r_{ij}t_{jk})d_k=
\sum\limits^m_{j=1}r_{ij}(\sum\limits^t_{k=1}t_{jk}d_k)$$
with $\sum\limits^t_{k=1}t_{jk}d_k\in A$. That is, $sa_i=\sum\limits^m_{j=1}r_{ij}x_j$ is solvable in $A$.

$(2)\Rightarrow(1)$: Let $s\in S$ satisfying (2) and $M$ be an $R$-module. Then we have a $u$-$S$-exact sequence
$M\otimes_RA\xrightarrow{1\otimes f} M\otimes_RB\rightarrow M\otimes_RC\rightarrow 0$ by Theorem \ref{s-iso-tor}. We will show that $\Ker(1\otimes f)$ is  $u$-$S$-torsion. Let $\{\sum\limits^{n_\lambda}_{i=1}u^\lambda_i\otimes a^\lambda_i \mid \lambda\in \Lambda\}$ be the generators of $\Ker(1\otimes f)$. Then  $\sum\limits^{n_\lambda}_{i=1}u^\lambda_i\otimes f(a^\lambda_i)=0$ in $M\otimes_RB$ for each $\lambda\in \Lambda$. By \cite[Chapter I, Lemma 6.1]{FS01}, there exists $r^\lambda_{ij}\in R$ and $b^\lambda_j\in B$ such that $f(a^\lambda_i)=\sum\limits^{m_\lambda}_{j=1}r^\lambda_{ij}b^\lambda_j$ and $\sum\limits^{n_\lambda}_{i=1}u^\lambda_ir^\lambda_{ij}=0$ for each $\lambda\in \Lambda$. So $sa^\lambda_i=\sum\limits^{m_\lambda}_{j=1}r^\lambda_{ij}x^\lambda_j$  have a solution, say $a^\lambda_j$ in $A$ by (2). Then $$s(\sum\limits^{n_\lambda}_{i=1}u^\lambda_i\otimes a^\lambda_i)=\sum\limits^{n_\lambda}_{i=1}u^\lambda_i\otimes sa^\lambda_i=\sum\limits^{n_\lambda}_{i=1}u^\lambda_i\otimes (\sum\limits^{m_\lambda}_{j=1}r^\lambda_{ij}a^\lambda_j)=\sum\limits^{m_\lambda}_{j=1}((\sum\limits^{n_\lambda}_{i=1}r^\lambda_{ij}u^\lambda_i)\otimes a^\lambda_j)=0$$ for each $\lambda\in \Lambda$. Hence $s\Ker(1\otimes f)=0$, and $0\rightarrow M\otimes_RA\rightarrow M\otimes_RB\rightarrow M\otimes_RC\rightarrow 0$ is  $u$-$S$-exact.

 $(2)\Rightarrow(3)$:  Let $s\in S$ satisfying (2) and $\{e_1,\cdots,e_n\}$ the basis of $F$. Suppose $K$ is generated by $\{y_i=\sum\limits^{m}_{j=1}r_{ij}e_j\mid i=1,\cdots,m\}$. Set $\beta(e_j)=b_j$ and $\alpha(y_i)=a_i$, then $f(a_i)=\sum\limits^{m}_{j=1}r_{ij}b_j$. By $(2)$, we have
$sa_i=\sum\limits^m_{j=1}r_{ij}d_j$ for some $d_j\in A$. Let $\eta:F\rightarrow A$ be $R$-homomorphism satisfying $\eta(e_j)=d_j$. Then $\eta i(y_i)=\eta i(\sum\limits^{m}_{j=1}r_{ij}e_j)=\sum\limits^{m}_{j=1}r_{ij}\eta(e_j)=\sum\limits^{m}_{j=1}r_{ij}d_j=sa_i=s\alpha(y_i)$,
and so we have $s\alpha=\eta i$.

$(3)\Rightarrow(4)$:  Let $s\in S$ satisfy (3). Note that  $A$ is $u$-$S$-isomorphic to $\Im(f)$ and $C$ is $u$-$S$-isomorphic to $\Coker(f)$. Thus, by Proposition \ref{s-iso-inv}, we have homomorphisms $t_1:A\rightarrow \Im(f)$ with $t_1(a)=f(a)$ for any $a\in A$ and $t'_1:\Im(f)\rightarrow A$ such that $t_1t'_1=s_1\Id_{\Im(f)}$ and $t'_1t_1=s_1\Id_{A}$, and  homomorphisms $t_2:\Coker(f)\rightarrow C$ and $t'_2:C\rightarrow \Coker(f)$ such that $f'=t_2\pi_{\Coker(f)}$, $t_2t'_2=s_2\Id_{C}$ and $t'_2t_2=s_2\Id_{\Coker(f)}$ for some $s_1,s_2\in S$ where $\pi_{\Coker(f)}:B\twoheadrightarrow \Coker(f)$ is the natural epimorphism.
Let $N$ be a  finitely presented $R$-module with $0\rightarrow K\rightarrow F\rightarrow N\rightarrow 0$ exact where $F$ is finitely generated free and $K$ finitely generated. Let $\gamma$ be a homomorphism in  $\Hom_R(N,C)$. Considering the exact sequence $0\rightarrow \Im(f)\rightarrow B\rightarrow \Coker(f)\rightarrow 0$, we have the following commutative diagram with rows exact:
$$\xymatrix@R=20pt@C=30pt{
0\ar[r] &K\ar[d]_{h}\ar[r]^{i_K}&F\ar[r]^{\pi_N} \ar[d]^{g}& N\ar[d]^{t'_2\gamma}\ar[r]&0\\
0\ar[r] & \Im(f)\ar[r]_{i_{\Im(f)}} &B\ar[r]_{\pi_{\Coker(f)}}  &\Coker(f)\ar[r] & 0\\}$$
By (3), there exists an homomorphism $\eta:F\rightarrow A$ such that $st'_1h=\eta i_K$. So $ss_1h=st_1t'_1h=t_1\eta i_K$. So the following diagram is also commutative:
$$\xymatrix@R=20pt@C=30pt{
0\ar[r] &K\ar[d]_{ss_1h}\ar[r]^{i_K}&F\ar[ld]^{t_1\eta} \ar[r]^{\pi_N} \ar[d]^{ss_1g}& N\ar@{-->}[ld]^{\delta}\ar[d]^{ss_1t'_2\gamma}\ar[r]&0\\
0\ar[r] & \Im(f)\ar[r]_{i_{\Im(f)}} &B\ar[r]_{\pi_{\Coker(f)}}  &\Coker(f)\ar[r] & 0\\}$$
So by  \cite[Exercise 1.60]{fk16}, there is an $R$-homomorphism $\delta:N\rightarrow B$ such that $ss_1t'_2\gamma=\pi_{\Coker(f)}\delta$. So $ss_1s_2\gamma=ss_1t_2t'_2\gamma=t_2\pi_{\Coker(f)}\delta=f'\delta=f'^{\ast}(\delta)$. Hence $f'^{\ast}: \Hom_R(N,B)\rightarrow \Hom_R(N,C)$ is a $u$-$S$-epimorphism  with respect to  $ss_1s_2$. Consequently, one can verify the  $R$-sequence $0\rightarrow \Hom_R(N,A)\rightarrow \Hom_R(N,B)\rightarrow \Hom_R(N,C)\rightarrow 0$ is $u$-$S$-exact  with respect to  $ss_1s_2$ by Theorem \ref{s-iso-ext}.

$(4)\Rightarrow(2)$:  Let $s\in S$ satisfying (4) and $0\rightarrow A\xrightarrow{f} B\xrightarrow{f'} C\rightarrow 0$ a  short $u$-$S$-exact sequence of $R$-modules. Similar with the proof of $(3)\Rightarrow(4)$, we have homomorphisms $t_1:A\rightarrow \Im(f)$ with $t_1(a)=f(a)$ for any $a\in A$ and $t'_1:\Im(f)\rightarrow A$ such that $t_1t'_1=s_1\Id_{\Im(f)}$ and $t'_1t_1=s_1\Id_{A}$, and  homomorphisms $t_2:\Coker(f)\rightarrow C$ and $t'_2:C\rightarrow \Coker(f)$ such that $f'=t_2\pi_{\Coker(f)}$, $t_2t'_2=s_2\Id_{C}$ and $t'_2t_2=s_2\Id_{\Coker(f)}$ for some $s_1,s_2\in S$ where $\pi_{\Coker(f)}:B\twoheadrightarrow \Coker(f)$ is the natural epimorphism.

Suppose that  $f(a_i)=\sum\limits^m_{j=1}r_{ij}b_j\ (i=1,\cdots,n)$ with $a_i\in A$, $b_j\in B$ and  $r_{ij}\in R$. Let $F_0$ be a free module with basis $\{e_1,\cdots,e_m\}$ and  $F_1$ a free module with basis $\{e'_1,\cdots,e'_n\}$. Then there are $R$-homomorphisms $\tau: F_0\rightarrow B$ and $\sigma: F_1\rightarrow \Im(f)$ satisfying $\tau(e_j)=b_j$ and $\sigma(e'_i)=f(a_i)$ for each $i,j$. Define $R$-homomorphism $h:F_1\rightarrow F_0$ satisfying $h(e'_i)=\sum\limits^m_{j=1}r_{ij}e_j$ for each $i$. Then $\tau h(e'_i)=\sum\limits^m_{j=1}r_{ij}\tau(e_j)=\sum\limits^m_{j=1}r_{ij}b_j=f(a_i)=\sigma(e'_i)$. Set $N=\Coker(h)$. Then $N$ is finitely presented. Thus there exists a homomorphism $\phi: N\rightarrow \Coker(f)$ such that the following diagram commutative:
$$\xymatrix@R=20pt@C=30pt{
 &F_1\ar[d]_{\sigma}\ar[r]^{h}&F_0\ar[r]^{g} \ar[d]^{\tau}& N\ar@{-->}[d]^{\phi}\ar[r]&0\\
0\ar[r] & \Im(f)\ar[r]_{i_{\Im(f)}} &B\ar[r]_{\pi_{\Coker(f)}}  &\Coker(f)\ar[r] & 0\\}$$
Note that the induced sequence $$0\rightarrow\Hom_R(N,\Im(f))\rightarrow \Hom_R(N,B)\rightarrow \Hom_R(N,\Coker(f))\rightarrow 0$$ is $u$-$S$-exact with respect to $s_1s_2s$ by (4). Hence there exists a homomorphism $\delta: N\rightarrow \Coker(f)$ such that $s_1s_2s\phi=\pi_{\Coker(f)}\delta$.
Consider the following commutative diagram:
$$\xymatrix@R=20pt@C=30pt{
 &F_1\ar[d]_{s_1s_2s\sigma}\ar[r]^{h}&F_0\ar@{-->}[ld]^{\eta}\ar[r]^{g} \ar[d]^{s_1s_2s\tau}& N\ar[ld]^{\delta}\ar[d]^{s_1s_2s\phi}\ar[r]&0\\
0\ar[r] & \Im(f)\ar[r]_{i_{\Im(f)}} &B\ar[r]_{\pi_{\Coker(f)}}  &\Coker(f)\ar[r] & 0\\}$$
We claim that there exists a homomorphism $\eta:F_0\rightarrow \Im(f)$ such that $\eta f=s_1s_2s\sigma$. Indeed, since $\pi_{\Coker(f)}\delta g=s_1s_2s\phi g=\pi_{\Coker(f)} s_1s_2s\tau$, we have $\Im(s_1s_2s\tau-\delta g)\subseteq \Ker(\pi_{\Coker(f)})=\Im(f)$. Define $\eta:F_0\rightarrow \Im(f)$ to be a homomorphism satisfying $\eta(e_i)=s_1s_2s\tau(e_i)-\delta g(e_i)$ for each $i$. So for each $e'_i\in F_1$, we have $\eta f(e'_i)=s_1s_2s\tau f(e'_i)-\delta g f(e'_i)=s_1s_2s\tau f(e'_i)$. Thus $i_{\Im(f)}(s_1s_2s\sigma)=s_1s_2si_{\Im(f)}\sigma=s_1s_2s\tau f=i_{\Im(f)}\eta f$. Therefore, $\eta f=s_1s_2s\sigma$. Hence $s_1s_2sf(a_i)=s_1s_2s\sigma(e'_i)=\eta f(e'_i)=\eta (\sum\limits^m_{j=1}r_{ij}e_j)=\sum\limits^m_{j=1}r_{ij}\eta (e_j)$ with $\eta (e_j)\in \Im(f)$. So we have $s^2_1s_2sa_i= s_1s_2st'_1f(a_i) =\sum\limits^m_{j=1}r_{ij}t'_1\eta (e_j)$ with $t'_1\eta (e_j)\in A$ for each $i$.
\end{proof}

Recall from  \cite[Definition 2.1]{zwz21-p} that a short $u$-$S$-exact sequence  $0\rightarrow A\xrightarrow{f} B\xrightarrow{g} C\rightarrow 0$ is said to be $u$-$S$-split provided that there are  $s\in S$ and $R$-homomorphism $t:B\rightarrow A$ such that $tf(a)=sa$ for any $a\in A$, that is, $t f=s\Id_A$.

\begin{proposition}\label{split-spur}
Let $\xi: 0\rightarrow A\xrightarrow{f} B\xrightarrow{g} C\rightarrow 0$ be an  $u$-$S$-split short $u$-$S$-exact sequence. Then $\xi$ is $u$-$S$-pure.
\end{proposition}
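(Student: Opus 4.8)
The plan is to unwind the definition of $u$-$S$-split and use the existence of the retraction-like map $t$ to build, for an arbitrary $R$-module $M$, an explicit map that witnesses the $u$-$S$-exactness of $0\rightarrow M\otimes_RA\rightarrow M\otimes_RB\rightarrow M\otimes_RC\rightarrow 0$. Concretely, since $\xi$ is $u$-$S$-split, there are $s\in S$ and $R$-homomorphism $t:B\rightarrow A$ with $tf=s\Id_A$. First I would tensor with $M$: the functor $M\otimes_R-$ is additive, so applying it to the identity $tf=s\Id_A$ gives $(1_M\otimes t)\circ(1_M\otimes f)=s\Id_{M\otimes_RA}$. This already says $1_M\otimes f:M\otimes_RA\rightarrow M\otimes_RB$ is a $u$-$S$-monomorphism with respect to $s$, since any element of $\Ker(1_M\otimes f)$ is killed by $s$.

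Next I would handle exactness at the middle and right terms. By Theorem \ref{s-iso-tor} applied to the $u$-$S$-exact sequence $\xi$ (taking $N=M$ there and noting that tensor is symmetric), the sequence $M\otimes_RA\xrightarrow{1\otimes f} M\otimes_RB\xrightarrow{1\otimes g} M\otimes_RC\rightarrow 0$ is already $u$-$S$-exact at $M\otimes_RB$ and at $M\otimes_RC$. So the only thing not automatically supplied by Theorem \ref{s-iso-tor} is the injectivity part at the left end, which is precisely what the computation in the previous paragraph provides. Combining: there is one element of $S$ (namely $s$ times whatever elements Theorem \ref{s-iso-tor} furnishes for the $u$-$S$-exactness at the other spots, multiplied together) that works simultaneously for all three positions, so $0\rightarrow M\otimes_RA\rightarrow M\otimes_RB\rightarrow M\otimes_RC\rightarrow 0$ is $u$-$S$-exact. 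Since $M$ was arbitrary, $\xi$ is $u$-$S$-pure by Definition \ref{def-s-f}.

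I do not expect any serious obstacle here; the proposition is essentially the uniformly-$S$ analogue of the classical fact that split exact sequences are pure, and the additivity of $-\otimes_RM$ does all the real work. The one small point requiring care is bookkeeping of the elements of $S$: the definition of $u$-$S$-pure requires, for each fixed $M$, a single $s_M\in S$ making the tensored sequence $u$-$S$-exact, and one must check that the contributions from the three spots (one from the split structure, two from Theorem \ref{s-iso-tor}) can be merged into a single element by taking their product, using that $S$ is multiplicatively closed. (It is worth noting that, in fact, the element $s$ from the splitting can be chosen uniformly in $M$, since $1_M\otimes f$ always has its kernel killed by the same $s$; only the auxiliary elements from Theorem \ref{s-iso-tor} a priori depend on $M$, but the definition of $u$-$S$-pure permits that dependence.) Once this is observed the proof is a two-line argument.
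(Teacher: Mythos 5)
Your proof is correct, but it takes a different route from the paper's. The paper verifies the equational criterion of Theorem \ref{c-s-pure}(2): given a system $f(a_i)=\sum_{j}r_{ij}b_j$ solvable in $B$, applying the splitting map $t$ yields $sa_i=tf(a_i)=\sum_j r_{ij}t(b_j)$ with $t(b_j)\in A$, so the modified system is solvable in $A$, and then the equivalence $(2)\Leftrightarrow(1)$ of Theorem \ref{c-s-pure} finishes the job. You instead work straight from Definition \ref{def-s-f}: tensoring the identity $tf=s\Id_A$ with $M$ gives $(1_M\otimes t)(1_M\otimes f)=s\Id_{M\otimes_RA}$, so $s$ kills $\Ker(1_M\otimes f)$ uniformly in $M$, while Theorem \ref{s-iso-tor} supplies $u$-$S$-exactness at the middle and right positions. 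Both arguments are sound; yours is more self-contained for this particular statement (it avoids routing through the proof of $(2)\Rightarrow(1)$ of Theorem \ref{c-s-pure}, which itself invokes Theorem \ref{s-iso-tor} and a relations lemma to control $\Ker(1\otimes f)$), and it makes transparent that the element $s$ coming from the splitting works for every $M$ at the left-hand position. The paper's route, by contrast, records along the way that $u$-$S$-split sequences satisfy the solvability criterion, which is the form most directly reusable elsewhere in the paper. One minor remark: your concern about merging the elements of $S$ for the three positions into a single product is unnecessary, since the definition of $u$-$S$-exactness permits a different element of $S$ at each spot; taking the product is of course harmless.
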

\begin{proof} Let $t:B\rightarrow A$ be an $R$-homomorphism satisfying $t f=s\Id_A$.  Let $f(a_i)=\sum\limits^m_{j=1}r_{ij}x_j$ be  a system of equations with $r_{ij} \in R$ and unknowns $x_1, \cdots, x_m$ has a solution, say $\{b_j\mid j=1,\mid, m\}$,  in $B$. Then  $sa_i=tf(a_i)=\sum\limits^m_{j=1}r_{ij}t(b_j)$ with $t(b_j)\in A$. Thus $sa_i=\sum\limits^m_{j=1}r_{ij}x_j$ is solvable in $A$. So $\xi$ is $u$-$S$-pure by Theorem \ref{c-s-pure}.
\end{proof}

Recall from \cite[Definition 3.1]{zwz21} that an $R$-module $F$ is called  $u$-$S$-flat provided that for any  $u$-$S$-exact sequence $0\rightarrow A\rightarrow B\rightarrow C\rightarrow 0$, the induced sequence $0\rightarrow A\otimes_RF\rightarrow B\otimes_RF\rightarrow C\otimes_RF\rightarrow 0$ is  $u$-$S$-exact. By \cite[Theorem 3.2]{zwz21}, an $R$-module $F$ is  $u$-$S$-flat if and only if   $\Tor^R_1(M,F)$ is   $u$-$S$-torsion for any  $R$-module $M$.

\begin{proposition}\label{s-f-sabp}
An $R$-module $F$ is $u$-$S$-flat if and only if every $(u$-$S$-$)$exact sequence $0\rightarrow A\rightarrow B\rightarrow F\rightarrow 0$ is $u$-$S$-pure.
\end{proposition}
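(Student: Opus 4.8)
The plan is to establish the two implications separately, both times combining Theorem~\ref{s-iso-tor} with the characterisation \cite[Theorem 3.2]{zwz21} stating that an $R$-module $F$ is $u$-$S$-flat if and only if $\Tor_1^R(M,F)$ is $u$-$S$-torsion for every $R$-module $M$.

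For the implication ``$\Rightarrow$'', suppose $F$ is $u$-$S$-flat and let $0\rightarrow A\xrightarrow{f} B\xrightarrow{g} F\rightarrow 0$ be any short $u$-$S$-exact sequence. Fix an $R$-module $M$. Applying Theorem~\ref{s-iso-tor} with $N=M$ (and the natural isomorphisms $X\otimes_RM\cong M\otimes_RX$ and $\Tor_1^R(F,M)\cong\Tor_1^R(M,F)$) produces a $u$-$S$-exact sequence
$$\Tor_1^R(M,F)\xrightarrow{\delta_1} M\otimes_RA\xrightarrow{1\otimes f} M\otimes_RB\xrightarrow{1\otimes g} M\otimes_RF\rightarrow 0.$$
This already yields the $u$-$S$-exactness of $0\rightarrow M\otimes_RA\rightarrow M\otimes_RB\rightarrow M\otimes_RF\rightarrow 0$ at $M\otimes_RB$ and at $M\otimes_RF$; what remains is to see that $1\otimes f$ is a $u$-$S$-monomorphism. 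Since $F$ is $u$-$S$-flat, $\Tor_1^R(M,F)$ is $u$-$S$-torsion, hence so is $\Im(\delta_1)$; if $s\in S$ kills $\Im(\delta_1)$ and $s'\in S$ witnesses $u$-$S$-exactness at $M\otimes_RA$ (so $s'\Ker(1\otimes f)\subseteq\Im(\delta_1)$), then $ss'\Ker(1\otimes f)=0$. Taking the product of the finitely many $S$-elements involved, the tensored sequence is $u$-$S$-exact, i.e. $0\rightarrow A\rightarrow B\rightarrow F\rightarrow 0$ is $u$-$S$-pure.

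For ``$\Leftarrow$'', assume every (honest) exact sequence $0\rightarrow A\rightarrow B\rightarrow F\rightarrow 0$ is $u$-$S$-pure, and pick one of the form $0\rightarrow A\xrightarrow{f} P\xrightarrow{g} F\rightarrow 0$ with $P$ free. For an arbitrary $R$-module $M$, the classical long exact $\Tor$-sequence together with $\Tor_1^R(M,P)=0$ gives an isomorphism $\Tor_1^R(M,F)\cong\Ker\big(1_M\otimes f: M\otimes_RA\rightarrow M\otimes_RP\big)$. Since the chosen sequence is $u$-$S$-pure, $1_M\otimes f$ is a $u$-$S$-monomorphism, so this kernel is $u$-$S$-torsion; hence $\Tor_1^R(M,F)$ is $u$-$S$-torsion for every $M$, and $F$ is $u$-$S$-flat by \cite[Theorem 3.2]{zwz21}. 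Because every honest short exact sequence is in particular a short $u$-$S$-exact sequence, the two parenthetical readings of the statement agree.

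I do not anticipate a genuine obstacle; the only delicate point is the bookkeeping of $S$-elements in the ``$\Rightarrow$'' direction — producing a single $s\in S$ with $s\Ker(1\otimes f)=0$ by amalgamating the element that kills $\Tor_1^R(M,F)$ with the element witnessing $u$-$S$-exactness coming from Theorem~\ref{s-iso-tor} — and remembering to pass through the symmetry isomorphisms for $\otimes$ and $\Tor$ so that Theorem~\ref{s-iso-tor}, which is stated with the fixed module in the second slot, applies.
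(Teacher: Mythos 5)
Your proof is correct and follows essentially the same route as the paper's: the forward direction applies Theorem~\ref{s-iso-tor} together with the $\Tor$-characterisation of $u$-$S$-flatness from \cite[Theorem 3.2]{zwz21}, and the converse tests against a projective (free) presentation of $F$. Your extra care with amalgamating the $S$-elements and with the symmetry isomorphisms for $\otimes$ and $\Tor$ only makes explicit what the paper leaves implicit.
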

\begin{proof} Suppose $F$ is a $u$-$S$-flat module. Let $M$ be an $R$-module and  $0\rightarrow A\rightarrow B\rightarrow F\rightarrow 0$ a short $u$-$S$-exact sequence. Then by Theorem \ref{s-iso-tor}, there is a $u$-$S$-exact sequence $\Tor_1^R(M,F)\rightarrow M\otimes_RA\rightarrow M\otimes_RB\rightarrow M\otimes_RF\rightarrow 0$. Since $F$ is $u$-$S$-flat, $\Tor^R_1(M,F)$ is   $u$-$S$-torsion by \cite[Theorem 3.2]{zwz21}. Hence $0\rightarrow M\otimes_RA\rightarrow M\otimes_RB\rightarrow M\otimes_RF\rightarrow 0$ is $u$-$S$-exact. So  $0\rightarrow A\rightarrow B\rightarrow F\rightarrow 0$ is $u$-$S$-pure.

On the other hand,  considering the exact sequence $0\rightarrow A\rightarrow P\rightarrow F\rightarrow 0$ with $P$ projective, we have an exact sequence  $0\rightarrow \Tor_1^R(M,F)\rightarrow M\otimes_RA\rightarrow M\otimes_RP\rightarrow M\otimes_RF\rightarrow 0$ for any $R$-module $M$. Since $0\rightarrow A\rightarrow P\rightarrow F\rightarrow 0$ is $u$-$S$-pure, $\Tor_1^R(M,F)$ is   $u$-$S$-torsion. So $F$ is $u$-$S$-flat
\end{proof}

\begin{proposition}\label{s-f-sp}
Let $\xi: 0\rightarrow A\rightarrow B\rightarrow C\rightarrow 0$ be a short $u$-$S$-exact sequence where $B$ is $u$-$S$-flat. Then $C$ is $u$-$S$-flat if and only if $\xi$ is $u$-$S$-pure.
\end{proposition}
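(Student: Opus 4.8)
The two implications are of very different weights, so I would treat them separately. For the easy direction, suppose $C$ is $u$-$S$-flat. Then $\xi\colon 0\to A\to B\to C\to 0$ is a short $u$-$S$-exact sequence whose right-hand term is $u$-$S$-flat, so Proposition \ref{s-f-sabp} applies verbatim and $\xi$ is $u$-$S$-pure. This uses nothing about $B$; the hypothesis that $B$ is $u$-$S$-flat will only be needed for the converse.

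For the converse, assume $\xi$ is $u$-$S$-pure and aim to show $C$ is $u$-$S$-flat. By \cite[Theorem 3.2]{zwz21} it suffices to check that $\Tor_1^R(N,C)$ is $u$-$S$-torsion for every $R$-module $N$, which via the symmetry isomorphism of $\Tor$ is the same as showing $\Tor_1^R(C,N)$ is $u$-$S$-torsion. Fix $N$. Applying Theorem \ref{s-iso-tor} to $\xi$ yields a $u$-$S$-exact sequence
$$\cdots \rightarrow \Tor_1^R(B,N)\xrightarrow{\ p\ } \Tor_1^R(C,N)\xrightarrow{\ \delta_1\ } A\otimes_RN\xrightarrow{\ 1\otimes f\ } B\otimes_RN\rightarrow\cdots.$$
Let $s_1\in S$ witness $u$-$S$-exactness at $A\otimes_RN$ and $s_3\in S$ witness $u$-$S$-exactness at $\Tor_1^R(C,N)$. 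Since $\xi$ is $u$-$S$-pure, $1\otimes f$ is a $u$-$S$-monomorphism, so $\Ker(1\otimes f)$ is $u$-$S$-torsion, say killed by $s_2\in S$; and since $B$ is $u$-$S$-flat, $\Tor_1^R(B,N)$ is $u$-$S$-torsion, say killed by $s_4\in S$.

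Then I would run the following chase: for any $x\in\Tor_1^R(C,N)$ we have $s_1\delta_1(x)\in\Ker(1\otimes f)$, hence $s_2s_1\delta_1(x)=0$, i.e. $s_2s_1x\in\Ker(\delta_1)$; therefore $s_3s_2s_1x\in\Im(p)$, say $s_3s_2s_1x=p(y)$ with $y\in\Tor_1^R(B,N)$; and finally $s_4y=0$ gives $s_4s_3s_2s_1x=p(s_4y)=0$. Since $s_4s_3s_2s_1\in S$ does not depend on $x$, $\Tor_1^R(C,N)$ is $u$-$S$-torsion, and as $N$ was arbitrary, $C$ is $u$-$S$-flat. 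There is no serious obstacle here; the only point deserving attention is the uniformity of the annihilator over all of $\Tor_1^R(C,N)$, which is guaranteed precisely because each instance of ``$u$-$S$-torsion'' and ``$u$-$S$-exact'' supplies a single element of $S$, so that a finite product of them does the job.
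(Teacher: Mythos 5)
Your proof is correct and follows essentially the same route as the paper: the forward implication via Proposition \ref{s-f-sabp}, and the converse by placing $\Tor_1^R(C,N)$ in the long $u$-$S$-exact Tor sequence of Theorem \ref{s-iso-tor} between $\Tor_1^R(B,N)$ (torsion by $u$-$S$-flatness of $B$) and $\Ker(1\otimes f)$ (torsion by $u$-$S$-purity). The only difference is that you spell out the element chase and the product of witnesses $s_4s_3s_2s_1$ explicitly, which the paper leaves implicit.
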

\begin{proof} Suppose $C$ is $u$-$S$-flat. Then $\xi$ is $u$-$S$-pure by Proposition \ref{s-f-sabp}.

On the other hand, let $M$ be an $R$-module. Then we have a $u$-$S$-exact sequence $\Tor_1^R(M,B) \rightarrow\Tor_1^R(M,C) \rightarrow M\otimes_RA\rightarrow M\otimes_RB\rightarrow M\otimes_RC\rightarrow 0$.
Since  $B$ is $u$-$S$-flat, $\Tor_1^R(M,B)$ is  $u$-$S$-torsion by \cite[Theorem 3.2]{zwz21}. Since $\xi$ is $u$-$S$-pure by assumption, $0\rightarrow M\otimes_RA\rightarrow M\otimes_RB\rightarrow M\otimes_RF\rightarrow 0$ is $u$-$S$-exact. Then  $\Tor_1^R(M,C)$ is also  $u$-$S$-torsion. Thus $C$ is $u$-$S$-flat by \cite[Theorem 3.2]{zwz21} again.
\end{proof}

\section{uniformly $S$-absolutely pure modules}

Recall from \cite{M67} that an $R$-module $E$ is said to be absolutely pure provided that $E$ is a pure submodule of every module which contains $E$ as a submodule, that is, any short exact sequence $0\rightarrow E\rightarrow B\rightarrow C\rightarrow 0$ beginning with $E$ is pure. Now we give the uniformly $S$-analogue of absolutely pure modules.

\begin{definition}\label{s-abp} Let $R$ be a ring and $S$ a multiplicative subset of $R$.
An $R$-module $E$ is said to be $u$-$S$-absolutely pure (abbreviates uniformly $S$-absolutely pure) provided that any short $u$-$S$-exact sequence $0\rightarrow E\rightarrow B\rightarrow C\rightarrow 0$ beginning with $E$ is $u$-$S$-pure.
\end{definition}

Recall from \cite[Definition 4.1]{QKWCZ21} that an $R$-module $E$ is called  \emph{$u$-$S$-injective} provided that the induced sequence $$0\rightarrow \Hom_R(C,E)\rightarrow \Hom_R(B,E)\rightarrow \Hom_R(A,E)\rightarrow 0$$ is $u$-$S$-exact for any $u$-$S$-exact sequence $0\rightarrow A\rightarrow B\rightarrow C\rightarrow 0$. Following from \cite[Theorem 4.3]{QKWCZ21}, an $R$-module $E$ is  $u$-$S$-injective if and only if for any short exact sequence $0\rightarrow A\rightarrow B\rightarrow C\rightarrow 0$, the induced sequence $0\rightarrow \Hom_R(C,E)\rightarrow \Hom_R(B,E)\rightarrow \Hom_R(A,E)\rightarrow 0$ is  $u$-$S$-exact, if and only if  $\Ext_R^1(M,E)$ is   $u$-$S$-torsion for any  $R$-module $M$, if and only if  $\Ext_R^n(M,E)$ is   $u$-$S$-torsion for any  $R$-module $M$ and $n\geq 1$. Next, we characterize $u$-$S$-absolutely pure modules in terms of $u$-$S$-injective modules.

\begin{theorem}\label{c-s-abp}
Let $R$ be a ring, $S$ a multiplicative subset of $R$ and $E$ an $R$-module.
Then the following statements are equivalent:
\begin{enumerate}
\item $E$ is $u$-$S$-absolutely pure;
\item  any short exact sequence $0\rightarrow E\rightarrow B\rightarrow C\rightarrow 0$ beginning with $E$ is $u$-$S$-pure;
\item  $E$ is a $u$-$S$-pure submodule in every $u$-$S$-injective module containing $E$;
\item  $E$ is a $u$-$S$-pure submodule in every injective module containing $E$;
\item  $E$ is a $u$-$S$-pure submodule in its injective envelope;
\item there exists an element $s\in S$ satisfying that  for any finitely presented $R$-module $N$, $\Ext_R^1(N,E)$ is  $u$-$S$-torsion with respect to $s$;
\item  there exists an element $s\in S$ satisfying that  if  $P$ is finitely generated projective, $K$ a finitely generated submodule of $P$ and  $f:K\rightarrow E$ is an  $R$-homomorphism, then there is an $R$-homomorphism $g:P\rightarrow E$ such that $sf=gi$.
\end{enumerate}
\end{theorem}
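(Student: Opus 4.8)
The plan is to prove the seven equivalences by a cycle of implications, relying heavily on the fact (from \cite[Theorem 4.3]{QKWCZ21}) that $u$-$S$-injectivity is detected by $\Ext^1_R(M,E)$ being $u$-$S$-torsion for all $M$, and on the characterizations of $u$-$S$-pure $u$-$S$-exact sequences from Theorem \ref{c-s-pure}. First, $(1)\Leftrightarrow(2)$ is essentially immediate: $(1)\Rightarrow(2)$ since an exact sequence is in particular $u$-$S$-exact, and $(2)\Rightarrow(1)$ because given any short $u$-$S$-exact sequence $0\to E\to B\to C\to 0$ one replaces it (using Proposition \ref{s-iso-inv} and the short exact sequence $0\to E\to B\to \Coker\to 0$ or rather $0\to \Im\to B\to\Coker\to 0$ together with the $u$-$S$-isomorphisms $E\sim\Im$, $C\sim\Coker$) by a genuine short exact sequence through which $u$-$S$-purity transfers; this replacement argument is the same device used throughout the proof of Theorem \ref{c-s-pure}, so I would cite that pattern rather than redo it.

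Next I would handle the ``submodule'' reformulations. The chain $(2)\Rightarrow(3)\Rightarrow(4)\Rightarrow(5)$ is routine: $(3)$ restricts $(2)$ to $u$-$S$-injective ambient modules, $(4)$ further restricts to injective ones (every injective module is $u$-$S$-injective), and $(5)$ restricts to the injective envelope. The substantive return arrow is $(5)\Rightarrow(6)$: suppose $E\hookrightarrow \mathrm{E}(E)$ is $u$-$S$-pure, say with respect to $s\in S$, and let $N$ be finitely presented. From $0\to E\to \mathrm{E}(E)\to \mathrm{E}(E)/E\to 0$ Theorem \ref{s-iso-ext} gives a $u$-$S$-exact sequence $\Hom_R(N,\mathrm{E}(E))\to \Hom_R(N,\mathrm{E}(E)/E)\to \Ext^1_R(N,E)\to \Ext^1_R(N,\mathrm{E}(E))$; since $\mathrm{E}(E)$ is injective the last term vanishes, and by the equivalence $(1)\Leftrightarrow(4)$ of Theorem \ref{c-s-pure} the map $\Hom_R(N,\mathrm{E}(E))\to\Hom_R(N,\mathrm{E}(E)/E)$ is a $u$-$S$-epimorphism with respect to some $s'$ depending only on the purity witness $s$ (not on $N$); a short diagram chase then shows $s'\Ext^1_R(N,E)=0$ uniformly. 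Then $(6)\Rightarrow(7)$ translates the vanishing of $\Ext^1_R(N,E)$ with $N=P/K$ into the lifting statement via the long exact sequence of $\Hom_R(-,E)$ applied to $0\to K\to P\to P/K\to 0$, exactly as in the classical FP-injective case, keeping track that the same $s$ works for all $(P,K)$.

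For $(7)\Rightarrow(1)$, I would close the cycle by showing the lifting property of $(7)$ forces every short $u$-$S$-exact sequence $0\to E\to B\to C\to 0$ to satisfy condition $(3)$ of Theorem \ref{c-s-pure}. Given the test diagram with $F$ finitely generated free, $K\subseteq F$ finitely generated, $\alpha:K\to E$, $\beta:F\to B$ with $f\alpha=\beta i$: after passing through the $u$-$S$-isomorphism $E\sim\Im(f)$ we may work with the honest inclusion $\Im(f)\hookrightarrow B$; apply $(7)$ with $P=F$ to the map $\alpha:K\to E$ to get $g:F\to E$ with $s\alpha=gi$, and this $g$ (composed back with $t_1:E\to\Im(f)$) is the required lift, up to a further uniform factor. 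Thus the sequence is $u$-$S$-pure with respect to a product of the purity witness from $(7)$ and the fixed elements $s_1,s_2$ arising from the $u$-$S$-isomorphisms, giving $(1)$.

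The main obstacle is bookkeeping the uniformity of the element $s\in S$: in several of these steps ($(5)\Rightarrow(6)$ and $(7)\Rightarrow(1)$ especially) one must verify that a single $s$ — or a fixed finite product built from the purity witness and the structural elements $s_1,s_2$ of Proposition \ref{s-iso-inv} — works simultaneously for all finitely presented $N$ (resp.\ all pairs $(P,K)$, all test diagrams), rather than an $s$ depending on the module in play; this is precisely the distinction between ``$S$-absolutely pure'' and ``$u$-$S$-absolutely pure'' and is exactly where the arguments of Theorem \ref{c-s-pure}, which already isolate such uniform witnesses, must be invoked carefully. The homological steps themselves are routine given Theorems \ref{s-iso-tor} and \ref{s-iso-ext} and \cite[Theorem 4.3]{QKWCZ21}.
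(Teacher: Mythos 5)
Your proposal is correct and follows essentially the same route as the paper: the trivial chain $(1)\Rightarrow(2)\Rightarrow(3)\Rightarrow(4)\Rightarrow(5)$, then $(5)\Rightarrow(6)$ via the $u$-$S$-purity of $E$ in its injective envelope together with condition (4) of Theorem \ref{c-s-pure} and the vanishing of $\Ext^1_R(N,\mathrm{E}(E))$, then $(6)\Leftrightarrow(7)$ via the long exact sequence for $0\to K\to P\to P/K\to 0$; the only difference is that you close the cycle with $(7)\Rightarrow(1)$ through condition (3) of Theorem \ref{c-s-pure} (the Hom-lifting test) where the paper uses $(6)\Rightarrow(1)$ through condition (4) and Theorem \ref{s-iso-ext}, and these are interchangeable since the witness in $(7)$ is uniform over all test diagrams. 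Your additional direct sketch of $(2)\Rightarrow(1)$ is redundant (the cycle already closes) and as stated has a small gap — the genuine exact sequence you would substitute is $0\to \Im(f)\to B\to \Coker(f)\to 0$, which begins with $\Im(f)$ rather than $E$, so $(2)$ does not literally apply to it without first transferring the hypothesis along the $u$-$S$-isomorphism — but since that implication is not needed for the cycle, this does not affect the correctness of the proof.
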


\begin{proof} $(1)\Rightarrow (2)\Rightarrow (3)\Rightarrow (4)\Rightarrow (5)$: Trivial.

$(5)\Rightarrow(6)$: Let $I$ be the injective envelope of $E$. Then we have a $u$-$S$-pure exact sequence $0\rightarrow E\rightarrow I\rightarrow L\rightarrow 0$ by (5). Then, by  Theorem \ref{c-s-pure}, there is an element $s\in S$ such that $0\rightarrow\Hom_R(N,E)\rightarrow \Hom_R(N,I)\rightarrow \Hom_R(N,L)\rightarrow 0$  is $u$-$S$-exact with respect to $s$  for any finitely presented $R$-module $N$. Since  $0\rightarrow\Hom_R(N,E)\rightarrow \Hom_R(N,I)\rightarrow \Hom_R(N,L)\rightarrow \Ext_R^1(N,E)\rightarrow 0$ is exact. Hence $\Ext_R^1(N,E)$ is  $u$-$S$-torsion with respect to $s$ for any finitely presented $R$-module $N$.

$(6)\Rightarrow(1)$: Let $s\in S$ satisfy (6). Let  $N$ be a finitely presented $R$-module and $0\rightarrow E\rightarrow B\rightarrow C\rightarrow 0$ a $u$-$S$-exact sequence with respect to $s_1\in S$. Then, by Theorem \ref{s-iso-ext},  there is a $u$-$S$-exact sequence $0\rightarrow\Hom_R(N,E)\rightarrow \Hom_R(N,B)\rightarrow \Hom_R(N,C)\rightarrow \Ext_R^1(N,E)$ with respect to $s_1$ for any finitely presented $R$-module $N$. By (6), $0\rightarrow\Hom_R(N,E)\rightarrow \Hom_R(N,B)\rightarrow \Hom_R(N,C)\rightarrow  0$  is $u$-$S$-exact with respect to $ss_1$ for any finitely presented $R$-module $N$ .
Hence $E$ is $u$-$S$-absolutely pure by  Theorem \ref{c-s-pure}.

$(6)\Rightarrow(7)$:  Let $s\in S$ satisfy (6). Considering the exact sequence $0\rightarrow K\xrightarrow{i} P\rightarrow P/K\rightarrow 0$, we have the following exact sequence $ \Hom_R(P,E)\xrightarrow{i_{\ast}} \Hom_R(K,E)\rightarrow \Ext_R^1(P/K,E)\rightarrow 0$. Since $P/K$ is finitely presented, $\Ext_R^1(P/K,E)$ is $u$-$S$-torsion with respect to $s$ by $(6)$. Hence $i_{\ast}$ is a $u$-$S$-epimorphism, and so $s\Hom_R(K,E)\subseteq \Im(i_{\ast})$. Let $f:K\rightarrow E$ be an $R$-homomorphism. Then there is an $R$-homomorphism $g:P\rightarrow E$ such that $sf=gi$.

$(7)\Rightarrow(6)$: Let $s\in S$ satisfy (7). Let $N$ be a finitely presented $R$-module.  Then we have an exact sequence $0\rightarrow K\xrightarrow{i} P\rightarrow N\rightarrow 0$ where $P$ is finitely generated projective and $K$ is finitely generated. Consider the following exact sequence $ \Hom_R(P,E)\xrightarrow{i_{\ast}} \Hom_R(K,E)\rightarrow \Ext_R^1(N,E)\rightarrow 0$.  By $(7)$, we have  $s\Hom_R(K,E)\subseteq \Im(i_{\ast})$. Hence $\Ext_R^1(N,E)$ is  $u$-$S$-torsion with respect to $s$.
\end{proof}

\begin{proposition}\label{s-inj-prop}
Let $R$ be a ring and $S$ a multiplicative subset of $R$. Then the following statements hold.
\begin{enumerate}
\item  Any absolutely pure module and any  $u$-$S$-injective module is $u$-$S$-absolutely pure.
\item Any finite direct sum of  $u$-$S$-absolutely pure modules is  $u$-$S$-absolutely pure.
\item Let $0\rightarrow A\xrightarrow{f} B\xrightarrow{g} C\rightarrow 0$  be a $u$-$S$-exact sequence. If $A$ and $C$ are  $u$-$S$-absolutely pure modules, so is $B$.
\item  The class of $u$-$S$-absolutely pure modules is closed under $u$-$S$-isomorphisms.

\item Let $0\rightarrow A\rightarrow B\rightarrow C\rightarrow 0$  be a $u$-$S$-pure $u$-$S$-exact sequence. If $B$ is  $u$-$S$-absolutely pure, so is $B$.
\end{enumerate}
\end{proposition}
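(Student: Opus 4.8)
The plan is to push every assertion through the homological description of $u$-$S$-absolutely pure modules obtained in Theorem \ref{c-s-abp}: an $R$-module $E$ is $u$-$S$-absolutely pure exactly when there is one element $s\in S$ with $s\Ext^1_R(N,E)=0$ for all finitely presented $N$, together with the long $u$-$S$-exact $\Hom$/$\Ext$ sequences of Theorem \ref{s-iso-ext} (and, for (5), the $\Hom$-characterisation of $u$-$S$-purity in Theorem \ref{c-s-pure}(4)). None of the five items is hard once one has these tools; the recurring point of care is to ensure that the single element of $S$ one exhibits at the end does not depend on the finitely presented module $N$.

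For (1): if $E$ is absolutely pure then $\Ext^1_R(N,E)=0$, hence is $u$-$S$-torsion with respect to $1\in S$, so $E$ is $u$-$S$-absolutely pure by Theorem \ref{c-s-abp}. If $E$ is $u$-$S$-injective then each $\Ext^1_R(M,E)$ is $u$-$S$-torsion; applying this to $M_0=\bigoplus_N N$, with $N$ running over a set of representatives of the finitely presented $R$-modules, and using $\Ext^1_R(M_0,E)\cong\prod_N\Ext^1_R(N,E)$, produces one $s\in S$ annihilating all $\Ext^1_R(N,E)$, and again Theorem \ref{c-s-abp} applies. For (2) it is enough to treat a sum of two modules and induct, and there $\Ext^1_R(N,E_1\oplus E_2)\cong\Ext^1_R(N,E_1)\oplus\Ext^1_R(N,E_2)$, so if $s_i$ uniformly annihilates $\Ext^1_R(N,E_i)$ then $s_1s_2$ uniformly annihilates $\Ext^1_R(N,E_1\oplus E_2)$.

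Items (3), (4), (5) are short chases on $\Ext^1_R(N,-)$. In (3), Theorem \ref{s-iso-ext} supplies a $u$-$S$-exact segment $\Ext^1_R(N,A)\to\Ext^1_R(N,B)\to\Ext^1_R(N,C)$ whose defining element $t'\in S$ comes from the given $u$-$S$-exact sequence and not from $N$; combining $t'$ with uniform annihilators $s_A,s_C$ of $\Ext^1_R(N,A)$ and $\Ext^1_R(N,C)$ shows $s_As_Ct'$ uniformly annihilates $\Ext^1_R(N,B)$. In (4), Proposition \ref{s-iso-inv} gives $g\colon E'\to E$ and $t\in S$ with $fg=t\,\Id_{E'}$ and $gf=t\,\Id_E$; since $\Ext^1_R(N,-)$ is additive these become $\Ext^1_R(N,f)\Ext^1_R(N,g)=t\,\Id$ and $\Ext^1_R(N,g)\Ext^1_R(N,f)=t\,\Id$, so a uniform annihilator $s$ of $\Ext^1_R(N,E)$ yields the uniform annihilator $st$ of $\Ext^1_R(N,E')$. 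In (5) — where the conclusion should be read as: then $A$ (not $B$) is $u$-$S$-absolutely pure — Theorem \ref{c-s-pure}(4) applied to the $u$-$S$-pure sequence gives a uniform $s\in S$ making $\Hom_R(N,B)\to\Hom_R(N,C)$ a $u$-$S$-epimorphism; inserting this into the long $u$-$S$-exact sequence of Theorem \ref{s-iso-ext} shows the image of the connecting map $\Hom_R(N,C)\to\Ext^1_R(N,A)$ is uniformly $u$-$S$-torsion, so $\Ext^1_R(N,A)$ is, modulo uniform $u$-$S$-torsion, a submodule of $\Ext^1_R(N,B)$, and the latter is uniformly $u$-$S$-torsion because $B$ is $u$-$S$-absolutely pure.

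The genuine work is confined to the uniformity bookkeeping: a $u$-$S$-injective module is a priori only torsion module-by-module, which forces the passage to the single test module $M_0$ in (1); and in (3) and (5) one must check that every auxiliary element of $S$ entering the argument — those witnessing $u$-$S$-exactness of the $\Ext$-sequences in Theorem \ref{s-iso-ext} and the $u$-$S$-epimorphism coming from $u$-$S$-purity — can be chosen independently of $N$, so that the single-element criterion of Theorem \ref{c-s-abp} is genuinely met rather than only its pointwise shadow.
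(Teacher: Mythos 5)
Your proof is correct and follows the same overall strategy as the paper: reduce every item to the uniform criterion ``there is one $s\in S$ with $s\Ext^1_R(N,E)=0$ for all finitely presented $N$'' from Theorem \ref{c-s-abp}, and chase the long $u$-$S$-exact $\Ext$-sequences of Theorem \ref{s-iso-ext}. Items (2), (3) and (5) (including your correction that the conclusion of (5) should read ``so is $A$'') are exactly the paper's arguments. You deviate in two small but defensible places. In (1) the paper simply says the claim ``follows from Theorem \ref{c-s-abp}''; your test-module trick $M_0=\bigoplus_N N$ with $\Ext^1_R(M_0,E)\cong\prod_N\Ext^1_R(N,E)$ is a clean way to extract the uniform annihilator from the statement ``$\Ext^1_R(M,E)$ is $u$-$S$-torsion for each $M$'' as literally quoted, and is more careful than the paper (an alternative, closer to the paper's spirit, is that $u$-$S$-injectivity makes every extension of $E$ $u$-$S$-split, hence $u$-$S$-pure by Proposition \ref{split-spur}). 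In (4) the paper instead applies (3) to the two degenerate $u$-$S$-exact sequences $0\to A\to B\to 0\to 0$ and $0\to 0\to A\to B\to 0$; your argument via Proposition \ref{s-iso-inv} and functoriality of $\Ext^1_R(N,-)$, giving the annihilator $st$, is equally valid and arguably more transparent. Your emphasis on checking that the auxiliary elements of $S$ in Theorem \ref{s-iso-ext} depend only on the given short $u$-$S$-exact sequence and not on $N$ is exactly the point the paper leaves implicit.
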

\begin{proof}
$(1)$  Follows from Theorem \ref{c-s-abp}.

$(2)$ Suppose $E_1,\cdots,E_n$ are  $u$-$S$-absolutely pure modules.  Then there exists  $s_i\in S$ such that  $s_i\Ext_R^1(M,E_i)=0$ for any  finitely presented $R$-module $M$ $(i=1,\cdots,n)$. Set $s=s_1\cdots s_n$. Then $s\Ext_R^1(M,\bigoplus\limits_{i=1}^n E_i)\cong \bigoplus\limits_{i=1}^ns\Ext_R^1(M, E_i)=0$. Thus $\bigoplus\limits_{i=1}^n E_i$ is  $u$-$S$-absolutely pure.

$(3)$ Let $0\rightarrow A\xrightarrow{f} B\xrightarrow{g} C\rightarrow 0$  be a $u$-$S$-exact sequence. Since $A$ and $C$ are  $u$-$S$-absolutely pure modules, then,  by Theorem \ref{c-s-abp}, $\Ext^{1}_R(N,A)$ and $\Ext^{1}_R(N,C)$ are  $u$-$S$-torsion with respect to some $s_1, s_2\in S$ for any finitely presented $R$-module $N$. Considering the $u$-$S$-sequence $\Ext^{1}_R(N,A)\rightarrow \Ext^{1}_R(N,B)\rightarrow \Ext^{1}_R(N,C)$ by Theorem \ref{s-iso-ext}, we have   $\Ext^{1}_R(N,B)$ is   $u$-$S$-torsion with respect to $s_1s_2$ for any finitely presented $R$-module $N$. Hence  $B$ is  $u$-$S$-absolutely pure by Theorem \ref{c-s-abp} again.

$(4)$ Considering the $u$-$S$-exact sequences $0\rightarrow A\rightarrow B\rightarrow 0\rightarrow 0$ and  $0 \rightarrow 0\rightarrow  A\rightarrow B\rightarrow 0$, we have $A$ is $u$-$S$-absolutely pure if and only if  $B$ is  $u$-$S$-absolutely pure by $(3)$.

$(5)$  Let $0\rightarrow A\rightarrow B\rightarrow C\rightarrow 0$  be a $u$-$S$-pure $u$-$S$-exact sequence with respect to some $s\in S$. Then,  by Theorem \ref{s-iso-ext}, there exists a $u$-$S$-sequence $0\rightarrow \Hom_R(N,A)\rightarrow \Hom_R(N,B)\rightarrow \Hom_R(N,C)\rightarrow \Ext^{1}_R(N,A)\rightarrow \Ext^{1}_R(N,B)$ with respect to  $s$ for any finitely presented $R$-module $N$. Note that the natural homomorphism $\Hom_R(N,B)\rightarrow \Hom_R(N,C)$ is a $u$-$S$-epimorphism. Since $B$ is  $u$-$S$-absolutely pure, it follows that $\Ext^{1}_R(N,B)$ is  $u$-$S$-torsion  with respect to some $s_1\in S$ for any finitely presented $R$-module $N$ by Theorem \ref{c-s-abp}. Then $\Ext^{1}_R(N,A)$ is  $u$-$S$-torsion  with respect to  $ss_1$  for any finitely presented $R$-module $N$. Thus $A$ is  $u$-$S$-absolutely pure by Theorem \ref{c-s-abp} again.
\end{proof}

Let $\p$ be a prime ideal of $R$. We say an $R$-module $E$ is \emph{$u$-$\p$-absolutely pure} shortly provided that  $E$ is  $u$-$(R\setminus\p)$-absolutely pure.
\begin{proposition}\label{s-flat-loc-char}
Let $R$ be a ring and $E$ an $R$-module. Then the following statements are equivalent:
 \begin{enumerate}
\item  $E$ is absolutely pure;
\item   $E$ is   $u$-$\p$-absolutely pure for any $\p\in \Spec(R)$;
\item   $E$ is  $u$-$\m$-absolutely pure for any $\m\in \Max(R)$.
 \end{enumerate}
\end{proposition}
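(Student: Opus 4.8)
The plan is to prove the three-way equivalence by the cycle $(1)\Rightarrow(2)\Rightarrow(3)\Rightarrow(1)$, using the characterization of $u$-$S$-absolutely pure modules in terms of $u$-$S$-torsionness of $\Ext_R^1(N,E)$ from Theorem \ref{c-s-abp}(6), and the fact that for a prime $\p$, being $u$-$(R\setminus\p)$-torsion is detectable after localizing at $\p$. The implications $(1)\Rightarrow(2)\Rightarrow(3)$ are essentially formal: if $E$ is absolutely pure, then $\Ext_R^1(N,E)=0$ for every finitely presented $N$ (the classical Stenstr\"{o}m characterization, \cite{S70}), so a fortiori $\Ext_R^1(N,E)$ is killed by $1\in R\setminus\p$, hence $E$ is $u$-$\p$-absolutely pure; and $(2)\Rightarrow(3)$ is immediate since $\Max(R)\subseteq\Spec(R)$.

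The substantive implication is $(3)\Rightarrow(1)$. First I would recall that an $R$-module $E$ is absolutely pure if and only if $\Ext_R^1(N,E)=0$ for every finitely presented $R$-module $N$. So it suffices to fix such an $N$ and show $\Ext_R^1(N,E)=0$, and for this it is enough to show $\big(\Ext_R^1(N,E)\big)_\m=0$ for every maximal ideal $\m$. The key point is that since $N$ is finitely presented, localization commutes with $\Ext$: $\big(\Ext_R^1(N,E)\big)_\m\cong \Ext_{R_\m}^1(N_\m,E_\m)$. Now by hypothesis (3), $E$ is $u$-$\m$-absolutely pure, so by Theorem \ref{c-s-abp}(6) there is an element $s\in R\setminus\m$ with $s\Ext_R^1(N,E)=0$. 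But then in the localization $\big(\Ext_R^1(N,E)\big)_\m$ the element $s$ becomes a unit, so $\big(\Ext_R^1(N,E)\big)_\m = s\big(\Ext_R^1(N,E)\big)_\m = 0$. Since this holds for all $\m\in\Max(R)$, we conclude $\Ext_R^1(N,E)=0$, and since $N$ was an arbitrary finitely presented module, $E$ is absolutely pure.

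The main obstacle — really the only nontrivial ingredient — is the compatibility of $\Ext$ with localization, i.e. that $\Ext_R^1(N,E)_\m \cong \Ext_{R_\m}^1(N_\m, E_\m)$ when $N$ is finitely presented; this rests on choosing a projective resolution of $N$ by finitely generated free modules and using that $R\setminus\m$ is flat together with $\Hom_R(R^k, E)_\m\cong\Hom_{R_\m}(R_\m^k,E_\m)$. Given that, the argument is a short diagram-free computation. An alternative route, avoiding the explicit localization-of-$\Ext$ lemma, would be to argue directly: given a finitely presented $N$ and the surjection $\Hom_R(P,E)\to\Hom_R(K,E)\to\Ext_R^1(N,E)\to 0$ coming from a presentation $0\to K\to P\to N\to 0$ with $P,K$ finitely generated, one checks that an element $x\in\Ext_R^1(N,E)$ with $x\neq 0$ would survive in some localization $\Ext_{R_\m}^1(N_\m,E_\m)$, contradicting that the corresponding lift-problem over $R_\m$ is solvable by Theorem \ref{c-s-abp}(7) applied with $S=R\setminus\m$. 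Either way, the proof is complete in a few lines once the local-global principle for finitely generated modules being zero is invoked.
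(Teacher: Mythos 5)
Your proof is correct and follows essentially the same route as the paper: reduce to the Stenstr\"{o}m criterion $\Ext_R^1(N,E)=0$ for finitely presented $N$ via Theorem \ref{c-s-abp}(6), then run a local-global argument on the annihilators $s_\m\in R\setminus\m$. The only difference is cosmetic --- the paper notes that the ideal generated by the $s_\m$ is all of $R$ and writes $1$ as a finite combination of them, whereas you localize at each $\m$; in particular the isomorphism $\Ext_R^1(N,E)_\m\cong\Ext_{R_\m}^1(N_\m,E_\m)$ that you single out as the main obstacle is never actually needed, since $s_\m$ kills $\Ext_R^1(N,E)$ and becomes a unit in $R_\m$, so that localization vanishes directly without identifying it with an $\Ext$ over $R_\m$.
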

\begin{proof} $(1)\Rightarrow (2)\Rightarrow (3):$  Trivial.

$(3)\Rightarrow (1):$  Since $E$ is  $\m$-absolutely pure for any $\m\in \Max(R)$, we have $\Ext_R^1(N,E)$ is uniformly $(R\setminus\m)$-torsion for any finitely presented $R$-module $N$. Thus for any $\m\in \Max(R)$, there exists  $s_{\m}\in S$ such that $s_{\m}\Ext_R^1(N,E)=0$ for any finitely presented $R$-module $N$. Since the ideal generated by all $s_{\m}$ is $R$, $\Ext_R^1(N,E)=0$ for any finitely presented $R$-module $N$. So $E$ is absolutely pure.
\end{proof}

Recall from \cite[Definition 3.12]{zwz21} a ring $R$ is called  uniformly $S$-von Neumann regular provided there exists an element  $s\in S$ satisfies that for any $a\in R$ there exists  $r\in R$ such that $sa=ra^2$. It was proved in  \cite[Theorem 3.13]{zwz21} that a ring $R$ is  uniformly $S$-von Neumann regular if and only if any $R$-module is $u$-$S$-flat.
\begin{theorem}\label{sabs-svnr}
A ring $R$ is  uniformly $S$-von Neumann regular if and only if any $R$-module is $u$-$S$-absolutely pure.
\end{theorem}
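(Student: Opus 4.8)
The plan is to deduce both implications from the characterization of uniformly $S$-von Neumann regular rings in terms of $u$-$S$-flat modules recorded above (namely \cite[Theorem 3.13]{zwz21}), combined with Proposition~\ref{s-f-sabp} and the characterization of $u$-$S$-absolutely pure modules in Theorem~\ref{c-s-abp}.

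\emph{Necessity.} Assume $R$ is uniformly $S$-von Neumann regular, so that every $R$-module is $u$-$S$-flat. Let $E$ be an arbitrary $R$-module and let $0\to E\to B\to C\to 0$ be any short $u$-$S$-exact sequence. Since $C$ is $u$-$S$-flat, Proposition~\ref{s-f-sabp} (the ``only if'' part, applied with $F=C$) shows this sequence is $u$-$S$-pure. As the sequence was arbitrary, $E$ is $u$-$S$-absolutely pure by Definition~\ref{s-abp}. I expect this half to be essentially a one-line consequence of the cited results.

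\emph{Sufficiency.} Assume every $R$-module is $u$-$S$-absolutely pure; I must exhibit one $s\in S$ such that every $a\in R$ satisfies $sa=ra^2$ for some $r\in R$. The idea is to package all the cyclic modules $R/\Ann_R(a)$ into a single target. Concretely, set $E=\bigoplus_{a\in R}R/\Ann_R(a)$; it is $u$-$S$-absolutely pure, so Theorem~\ref{c-s-abp}$\,(1)\Rightarrow(7)$ yields a single $s\in S$ such that every $R$-homomorphism from a finitely generated submodule of a finitely generated projective into $E$ extends to the projective after multiplication by $s$. Now fix $a\in R$, take $P=R$ and $K=aR$, and define $f_a\colon aR\to E$ by $f_a(ax)=(x+\Ann_R(a))$ sitting in the $a$-th summand (well defined because $ax=ay$ forces $x-y\in\Ann_R(a)$). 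Then there is $g_a\colon R\to E$ with $g_ai=sf_a$, where $i\colon aR\hookrightarrow R$. Writing the $a$-th coordinate of $g_a(1)$ as $c+\Ann_R(a)$, the $a$-th coordinate of $g_a(a)=ag_a(1)$ is $ac+\Ann_R(a)$, while that of $g_a(a)=(g_ai)(a)=sf_a(a)$ is $s+\Ann_R(a)$; hence $a(ac-s)=0$, i.e.\ $sa=ca^2$. Since $s$ does not depend on $a$, $R$ is uniformly $S$-von Neumann regular.

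The only point requiring care is the \emph{uniformity of $s$ over all $a$}: Theorem~\ref{c-s-abp}(7) only produces, for a fixed module $E$, a single $s$ that works for all extension problems into that $E$, so one must first collect all the relevant quotients into one module $E$ before invoking it, which is exactly the role of the direct sum. (One could equally well use Theorem~\ref{c-s-abp}$(1)\Rightarrow(6)$ for the same $E$, writing $\Ext_R^1(R/aR,E)=\coker(\Hom_R(R,E)\to\Hom_R(aR,E))$ and using that $s$ kills it.) The remaining verifications—well-definedness of $f_a$, finite generation of $aR$ inside $R$, and the coordinate bookkeeping—are routine, so I do not anticipate further obstacles.
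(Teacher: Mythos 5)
Your proof is correct, but the sufficiency direction takes a genuinely different route from the paper's. For necessity, the paper passes through the injective envelope $I$ of $M$: it notes $I/M$ is $u$-$S$-flat, invokes Proposition~\ref{s-f-sabp} to get $u$-$S$-purity of $M\hookrightarrow I$, and concludes via criterion (5) of Theorem~\ref{c-s-abp}; your version applies Proposition~\ref{s-f-sabp} directly to an arbitrary $u$-$S$-exact sequence starting at $E$ (since its cokernel is $u$-$S$-flat), which verifies Definition~\ref{s-abp} without the detour and is if anything cleaner. For sufficiency, the paper stays entirely on the flatness side: given any module $M$, it takes a projective presentation $0\to K\to P\to M\to 0$, uses the hypothesis on $K$ to see this sequence is $u$-$S$-pure, deduces from Proposition~\ref{s-f-sp} that $M$ is $u$-$S$-flat, and then cites \cite[Theorem 3.13]{zwz21} (every module $u$-$S$-flat implies uniformly $S$-von Neumann regular). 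You instead verify the element-wise definition $sa=ra^2$ directly, by assembling $E=\bigoplus_{a\in R}R/\Ann_R(a)$ and applying Theorem~\ref{c-s-abp}(7) with $P=R$, $K=aR$; your bookkeeping ($f_a$ well defined, $a$-th coordinate comparison giving $a(ac-s)=0$) is sound, and the direct sum correctly secures a single $s$ uniform over all $a$, which is the one delicate point in this uniform $S$-setting. The paper's argument is shorter and modular, delegating the uniformity issue to the cited flatness characterization; yours is more self-contained in that direction and makes the uniformity of $s$ completely explicit, at the cost of an extra construction. Both are valid.
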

\begin{proof} Suppose $R$ is an  uniformly $S$-von Neumann regular ring. Let $M$ be an $R$-module and $I$ its injective envelope. Then $M/I$ is $u$-$S$-flat by \cite[Theorem 3.13]{zwz21}. Hence $M$ is a $u$-$S$-pure submodule of $I$ by Proposition \ref{s-f-sabp}. So $M$ is $u$-$S$-absolutely pure by Theorem \ref{c-s-abp}.

On the other hand, let $M$ be an $R$-module and $\xi: 0\rightarrow K\rightarrow P\rightarrow M\rightarrow 0$ an exact sequence with $P$ projective. Then $P$ is  $u$-$S$-flat. Since $K$ is  $u$-$S$-absolutely pure, the exact sequence $\xi$ is $u$-$S$-pure. By Proposition \ref{s-f-sp}, $M$ is also $u$-$S$-flat. Hence $R$ is  uniformly $S$-von Neumann regular by  \cite[Theorem 3.13]{zwz21}.
\end{proof}

It follows from Proposition \ref{s-inj-prop} that every absolutely pure module is $u$-$S$-absolutely pure. The following example shows that the converse is not true in general
\begin{example} \cite[Example 3.18]{zwz21}
Let $T=\mathbb{Z}_2\times \mathbb{Z}_2$ be a semi-simple ring and $s=(1,0)\in T$. Then any element $a\in T$ satisfies $a^2=a$ and $2a=0$. Let $R=T[x]/\langle sx,x^2\rangle$ with $x$ the indeterminate  and $S=\{1,s\}$ be a multiplicative subset of $R$. Then $R$ is an uniformly $S$-von Neumann regular ring, but $R$ is not von Neumann regular. Thus there exists a $u$-$S$-absolutely pure module $M$ which is not absolutely pure by Theorem \ref{sabs-svnr}.
\end{example}

Recall from \cite{QKWCZ21} that a ring $R$ is called a  uniformly $S$-Noetherian ring  provided that there exists an element $s\in S$ such that for any ideal $J$ of $R$, $sJ \subseteq K$ for some finitely generated sub-ideal $K$ of $J$. Following from Theorem \cite[Theorem 4.10]{QKWCZ21} that if  $S$ is a regular multiplicative subset of $R$ (i.e., the multiplicative set $S$ is composed of non-zero-divisors), then $R$ is   uniformly $S$-Noetherian if and only if  any direct sum of injective modules is $u$-$S$-injective. Now we give a new characterization of  uniformly $S$-Noetherian rings.

\begin{theorem}\label{usnoe-s-abp}
Let $R$ be a ring, $S$ a regular multiplicative subset of $R$.
Then the following statements are equivalent:
\begin{enumerate}
\item  $R$ is a uniformly   $S$-Noetherian ring;
\item    any $u$-$S$-absolutely pure module is $u$-$S$-injective;
\item    any absolutely pure module is $u$-$S$-injective.
\end{enumerate}
\end{theorem}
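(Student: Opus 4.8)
The plan is to prove the cycle $(1)\Rightarrow(2)\Rightarrow(3)\Rightarrow(1)$. The implication $(2)\Rightarrow(3)$ will be immediate: by Proposition~\ref{s-inj-prop}(1) every absolutely pure module is $u$-$S$-absolutely pure, so the hypothesis in $(2)$ applies to it and forces it to be $u$-$S$-injective. The substance is in the two outer implications.

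For $(1)\Rightarrow(2)$ I would argue as follows. Let $s_1\in S$ witness that $R$ is uniformly $S$-Noetherian, so that every ideal $J$ admits a finitely generated $K\subseteq J$ with $s_1J\subseteq K$, and let $s_0\in S$ witness, via Theorem~\ref{c-s-abp}(6), that $E$ is $u$-$S$-absolutely pure, i.e. $s_0\Ext_R^1(N,E)=0$ for every finitely presented $N$. Fix an ideal $J$ and such a $K$; then $R/K$ is finitely presented, hence $s_0\Ext_R^1(R/K,E)=0$, while $s_1(J/K)=0$ gives $s_1\Hom_R(J/K,E)=0$. Apply $\Ext_R(-,E)$ to $0\to J/K\to R/K\to R/J\to 0$ to get the exact piece $\Hom_R(J/K,E)\xrightarrow{\phi}\Ext_R^1(R/J,E)\xrightarrow{\psi}\Ext_R^1(R/K,E)$. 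For $\xi\in\Ext_R^1(R/J,E)$ we have $s_0\psi(\xi)=0$, so $s_0\xi\in\Ker\psi=\Im\phi$, say $s_0\xi=\phi(\theta)$, and then $s_0s_1\xi=s_1\phi(\theta)=\phi(s_1\theta)=0$. Thus $s_0s_1\Ext_R^1(R/J,E)=0$ for every ideal $J$, uniformly in $J$. It then remains to pass from cyclic modules to all modules: I would invoke a uniformly $S$-version of Baer's criterion (from \cite{QKWCZ21}) to conclude that $\Ext_R^1(M,E)$ is $u$-$S$-torsion for all $M$, i.e. that $E$ is $u$-$S$-injective.

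For $(3)\Rightarrow(1)$ I would reduce to \cite[Theorem~4.10]{QKWCZ21}, which (for $S$ regular, as assumed) states that $R$ is uniformly $S$-Noetherian provided every direct sum of injective modules is $u$-$S$-injective. So it suffices to check that an arbitrary direct sum $\bigoplus_{i}E_i$ of injective modules is absolutely pure, since then $(3)$ makes it $u$-$S$-injective. For a finitely presented $N$, take an exact sequence $0\to K\to P\to N\to 0$ with $P$ finitely generated free and $K$ finitely generated; because $P$ and $K$ are finitely generated, $\Hom_R(P,-)$ and $\Hom_R(K,-)$ commute with arbitrary direct sums, hence so does their cokernel functor $\Ext_R^1(N,-)$, giving $\Ext_R^1(N,\bigoplus_i E_i)\cong\bigoplus_i\Ext_R^1(N,E_i)=0$. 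This completes the cycle.

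The main obstacle is the last step of $(1)\Rightarrow(2)$: upgrading the uniform annihilation of $\Ext_R^1(R/J,E)$ over all ideals $J$ to $u$-$S$-injectivity of $E$. The classical Zorn-chain extension proof of Baer's criterion does not adapt verbatim, since the unavoidable constant $s_0s_1$ spoils the well-definedness of the one-step extension $a+rb\mapsto f(a)+re$; so this uniformly $S$-analogue of Baer's criterion must be cited from (or carefully reproved following) \cite{QKWCZ21}. Everything else reduces to standard diagram chases and the elementary fact that $\Ext_R^1$ of a finitely presented module commutes with direct sums.
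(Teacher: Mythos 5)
Your skeleton $(1)\Rightarrow(2)\Rightarrow(3)\Rightarrow(1)$ is exactly the paper's, and two of the three legs are fine: $(2)\Rightarrow(3)$ is trivial for the reason you give, and your $(3)\Rightarrow(1)$ (direct sums of injectives are absolutely pure because $\Ext_R^1(N,-)$ commutes with direct sums for $N$ finitely presented, then quote \cite[Theorem 4.10]{QKWCZ21}) is precisely what the paper does, with the absolute purity of the direct sum spelled out more carefully than in the paper. Your computation in $(1)\Rightarrow(2)$ showing $s_0s_1\Ext_R^1(R/J,E)=0$ uniformly over all ideals $J$, via the sequence $\Hom_R(J/K,E)\to\Ext_R^1(R/J,E)\to\Ext_R^1(R/K,E)$, also matches the paper.

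The gap is the step you yourself flag: there is no unconditional ``uniformly $S$-Baer criterion'' to invoke, and the paper does not use one. What \cite[Proposition 4.9]{QKWCZ21} actually provides is a Baer-type criterion for modules that are in addition $u$-$S$-\emph{divisible}, and supplying that extra hypothesis is exactly where the regularity of $S$ enters the implication $(1)\Rightarrow(2)$ (in your write-up the regularity of $S$ is never used in this direction, which should have been a warning sign). The paper's fix: for $t\in S$ the module $R/Rt$ is finitely presented and, since $t$ is a non-zero-divisor, $\Ext_R^1(R/Rt,E)\cong E/tE$; hence $s_0(E/tE)=0$, i.e. $s_0E\subseteq tE$ for every $t\in S$, so $s_0E$ is $u$-$S$-divisible. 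One then replaces $E$ by $s_0E$ (which is $u$-$S$-isomorphic to $E$, hence still $u$-$S$-absolutely pure by Proposition~\ref{s-inj-prop}(4)), reruns your uniform annihilation of $\Ext_R^1(R/J,s_0E)$, and applies \cite[Proposition 4.9]{QKWCZ21} to the $u$-$S$-divisible module $s_0E$; finally $E$ is $u$-$S$-injective because $u$-$S$-injectivity is preserved under $u$-$S$-isomorphism (\cite[Proposition 4.7]{QKWCZ21}). Without this divisibility detour your argument stops at ``$\Ext_R^1(R/J,E)$ is uniformly killed'' and the passage to arbitrary $M$ is unjustified.
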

\begin{proof}
$(1)\Rightarrow (2)$: Suppose $R$ is a  uniformly $S$-Noetherian ring.  Let $s$ be an element in  $S$ such that for any ideal $J$ of $R$, $sJ \subseteq K$ for some finitely generated sub-ideal $K$ of $J$. Let $E$ be a $u$-$S$-absolutely pure module. Then there exists $s_2\in S$ such that $s_2\Ext_R^1(N,E)=0$ for any finitely presented $R$-module $N$. Let $s_1$ be an element in $S$. Consider the induced exact sequence $\Hom_R(R,E)\rightarrow \Hom_R(Rs_1,E)\rightarrow  \Ext_R^1(R/Rs_1,E)\rightarrow 0$. Since $R/Rs_1$ is finitely presented, $s_2\Ext_R^1(R/Rs_1,E)=s_2(E/s_1E)=0$ since $s_1$ is a non-zero-divisor. Then $s_2E=s_1s_2E$, and thus $s_2E$ is $u$-$S$-divisible. Since $s_2E$ is $u$-$S$-isomorphic to $E$, $s_2E$ is also $u$-$S$-absolutely pure by Proposition \ref{s-inj-prop}. Hence there exists  $s_3\in S$ such that $s_3\Ext_R^1(N,E)=0$ for any finitely presented $R$-module $N$. Consider the induced $u$-$S$-exact sequence $\Hom_R(J/K,s_2E)\rightarrow\Ext_R^1(R/J,s_2E)\rightarrow\Ext_R^1(R/K,s_2E)$. Since $R/K$ is finitely presented, we have $s_3\Ext_R^1(R/K,s_2E)=0$. Note that  $s\Hom_R(J/K,s_2E)=0$. Then $ss_3\Ext_R^1(R/J,s_2E)=0$. Since $s_2E$ is $u$-$S$-divisible, we have $s_2E$ is $u$-$S$-injective by \cite[Proposition 4.9]{QKWCZ21}. Since $s_2E$ is $u$-$S$-isomorphic to $E$, $E$ is also $u$-$S$-injective by \cite[Proposition 4.7]{QKWCZ21}.

$(2)\Rightarrow (3)$: Trivial.

$(3)\Rightarrow (1)$:  Let $\{I_{\lambda}\mid \lambda\in \Lambda\}$ be a family of injective modules, then $\bigoplus\limits_{\lambda\in \Lambda} I_{\lambda}$ is absolutely pure, and thus is $u$-$S$-injective by assumption. Consequently,  $R$ is a  uniformly $S$-Noetherian ring by \cite[Theorem 4.10]{QKWCZ21}.
\end{proof}

It is well-known that any direct sum and any direct product of absolutely pure modules are also absolutely pure. However, it does not work for $u$-$S$-absolutely pure modules.
\begin{example}
Let $R=\mathbb{Z}$ be the ring of integers, $p$ a prime in $\mathbb{Z}$ and  $S=\{p^n|n\geq 0\}$.
Then an $R$-module $M$ is $u$-$S$-absolutely pure module if and only if it is  $u$-$S$-injective by Theorem \ref{usnoe-s-abp}. Let $\mathbb{Z}/\langle p^k\rangle$ be cyclic group of order $p^k$ ($k\geq 1$). Then each $\mathbb{Z}/\langle p^k\rangle$ is   $u$-$S$-torsion, and thus is $u$-$S$-absolutely pure. However, the product $M:=\prod\limits_{k=1}^\infty \mathbb{Z}/\langle p^k\rangle$ is not $u$-$S$-injective by \cite[Remark 4.6]{QKWCZ21}, so it is also not $u$-$S$-absolutely pure.

We claim that  the direct sum $N:=\bigoplus\limits_{k=1}^\infty \mathbb{Z}/\langle p^k\rangle$ is also not $u$-$S$-absolutely pure. Indeed, consider the following exact sequence induced by the short exact sequence  $0\rightarrow \mathbb{Z}\rightarrow \mathbb{Q}\rightarrow \mathbb{Q}/\mathbb{Z}\rightarrow 0$:
$$0=\Hom_\mathbb{Z}(\mathbb{Q},N)\rightarrow \Hom_\mathbb{Z}(\mathbb{Z},N)\rightarrow \Ext^1_\mathbb{Z}(\mathbb{Q}/\mathbb{Z},N)\rightarrow \Ext^1_\mathbb{Z}(\mathbb{Q},N)\rightarrow 0.$$
Since the submodule $N=\Hom_\mathbb{Z}(\mathbb{Z},N)$ is not  $u$-$S$-torsion, $\Ext^1_\mathbb{Z}(\mathbb{Q}/\mathbb{Z},N)$ is also not  $u$-$S$-torsion. Then $N$ is not  $u$-$S$-injective by \cite[Theorem 4.3]{QKWCZ21}. So the direct sum $N:=\bigoplus\limits_{k=1}^\infty \mathbb{Z}/\langle p^k\rangle$ is also not $u$-$S$-absolutely pure.
\end{example}

We also note that, in Theorem \ref{c-s-abp}, the element $s\in S$ in the statement (6) (similar in  the statement (7)) is uniform for all finitely presented $R$-modules $N$.
\begin{example}
Let $R=\mathbb{Z}$ be the ring of integers, $p$ a prime in $\mathbb{Z}$ and  $S=\{p^n|n\geq 0\}$. Let $J_p$ be the additive group of all $p$-adic integers $($see \cite{FS15} for example$)$. Then $\Ext_{R}^1(N,J_p)$ is $u$-$S$-torsion for any finitely presented $R$-modules $N$. However, $J_p$ is not  $u$-$S$-absolutely pure.
\end{example}
\begin{proof} Let $N$ be a finitely presented $R$-module. Then, by \cite[Chapter 3, Theorem 2.7]{FS15}, $N\cong \mathbb{Z}^n\oplus \bigoplus\limits_{i=1}^m(\mathbb{Z}^n/\langle p^i\rangle)^{n_i}\oplus T$,
 where  $T$ is a finitely generated torsion $S$-divisible torsion-module. Thus $$\Ext_{R}^1(N,J_p)\cong \bigoplus\limits_{i=1}^m\Ext_{R}^1(\mathbb{Z}^n/\langle p^i\rangle,J_p)\cong \bigoplus\limits_{i=1}^m  (J_p/p^i J_p)\cong \bigoplus\limits_{i=1}^m\mathbb{Z}^n/\langle p^i\rangle$$ by \cite[Chapter 9, section 3 (G)]{FS15} and \cite[Chapter 1, Exercise 3(10)]{FS15}. So $\Ext_{R}^1(N,J_p)$ is obviously   $u$-$S$-torsion. However, $J_p$ is not  $u$-$S$-injective by \cite[Theorem 4.5]{QKWCZ21}. So  $J_p$ is not  $u$-$S$-absolutely pure by Theorem \ref{usnoe-s-abp}.
\end{proof}

\begin{acknowledgement}\quad\\
The author was supported by  the National Natural Science Foundation of China (No. 12061001).
\end{acknowledgement}

\bigskip


\begin{thebibliography}{99}
\bibitem{ad02}   D. D.  Anderson, T.  Dumitrescu, {\it $S$-Noetherian rings}, Comm. Algebra {\bf 30} (2002), 4407-4416.

\bibitem{s19} S. Bazzoni,  L. Positselski,  {\it  $S$-almost perfect commutative rings}, J. Algebra {\bf 532} (2019), 323-356.

\bibitem{bh18}  D. Bennis,  M. El Hajoui,  {\it  On $S$-coherence}, J. Korean Math. Soc. \textbf{55} (2018), no. 6, 1499-1512.


\bibitem{DD19} G. C. Dai, N. Q. Ding,  {\it  Coherent rings and absolutely pure precovers}, Comm. Algebra {\bf 47} (2019), no.11, 4743-4748.

\bibitem{FS15} L. Fuchs,   {\it Abelian Groups}   (Springer, Switzerland, 2015).

\bibitem{FS01} L. Fuchs, L. Salce,   {\it  Modules over Non-Noetherian Domains}, Providence, AMS, 2001.


\bibitem{J73} S. Jain, {\it Flat and FP-injectivity}, Proc. Amer. Math. Soc. \textbf{41} (1973), 437-442.



\bibitem{l15}  J. W. Lim,  {\it  A note on $S$-Noetherian domains}, Kyungpook Math. J. {\bf 55}, (2015), 507-514.

\bibitem{lO14} J. W. Lim, D. Y. Oh, {\it $S$-Noetherian properties on amalgamated algebras along an ideal},  J. Pure Appl. Algebra {\bf 218}, (2014), 2099-2123.

\bibitem{M67} B. Maddox, {\it  Absolutely pure modules}, Proc. Amer. Math. Soc. \textbf{18} (1967), 155-158.

\bibitem{M70} C. Megibben, {\it  Absolutely pure modules}, Proc. Amer. Math. Soc. \textbf{26} (1970), 561-566.


\bibitem{QKWCZ21} W. Qi,\  H. Kim,\ F. G. Wang,\ M. Z. Chen,\ W. Zhao, {\it  Uniformly  $S$-Noetherian rings}, submitted.  https://arxiv.org/abs/2201.07913.

\bibitem{R79} J. J. Rotman,{\it An Introduction to Homological Algebra}, Pure and Applied Mathematics, \textbf{85}, Academic Press, Inc. New York-London, 1979.

\bibitem{S70}  B. Stenstr\"{o}m,  {\it  Coherent rings and FP-injective modules}, J. London Math. Soc.
\textbf{2} (1970), no., 2, 323-329



\bibitem{fk16}  F. G.  Wang,  H. Kim,  {\it  Foundations of Commutative Rings and Their Modules}, Singapore, Springer, 2016.

\bibitem{w} R. Wisbauer, {\it  Founations of Module and Ring Theory}  (Algebra, Logic and Applications,  {\bf 3}, Amsterdam: Gordon and Breach, 1991).

\bibitem{zwz21}  X. L. Zhang,   {\it Characterizing $S$-flat modules and $S$-von Neumann regular rings by uniformity},  to appear in Bull. Korean Math. Soc., https://arxiv.org/abs/2105.07941.

\bibitem{zwz21-p}  X. L. Zhang,  W. Qi, {\it Characterizing $S$-projective modules and $S$-semisimple rings by uniformity}, https://arxiv.org/abs/2106.10441.
\end{thebibliography}
\end{document}